\DeclareMathOperator{\Z}{\mathbb{Z}}
\DeclareMathOperator{\N}{\mathbb{N}}
\DeclareMathOperator{\R}{\mathbb{R}}
\DeclareMathOperator{\bP}{\mathbb{P}}
\DeclareMathOperator{\A}{\mathbb{A}}
\DeclareMathOperator{\C}{\mathbb{C}}
\DeclareMathOperator{\Pics}{\mathbf{Pic}}
\DeclareMathOperator{\Jac}{\mathbf{Jac}}
\DeclareMathOperator{\bO}{\mathcal{O}}
\DeclareMathOperator{\Spec}{\mathrm{Spec}}
\DeclareMathOperator{\Proj}{\mathrm{Proj}}
\DeclareMathOperator{\chara}{\mathrm{char}}
\DeclareMathOperator{\tdeg}{\mathrm{tdeg}}
\DeclareMathOperator{\DEG}{\mathrm{DEG}}
\DeclareMathOperator{\red}{\mathrm{red}}
\DeclareMathOperator{\Div}{\mathrm{Div}}
\DeclareMathOperator{\Frac}{\mathrm{Frac}}
\DeclareMathOperator{\Pic}{\mathrm{Pic}}
\DeclareMathOperator{\sm}{\mathrm{sm}}
\DeclareMathOperator{\sing}{\mathrm{sing}}
\DeclareMathOperator{\zZ}{\mathrm{Z}}
\newtheorem{thm}{Theorem}[section]
\newtheorem{lem}[thm]{Lemma}
\newtheorem{prop}[thm]{Proposition}
\newtheorem{cor}[thm]{Corollary}
\newtheorem{conj}{Conjecture}
\theoremstyle{definition}
\newtheorem{defn}{Definition}[section]
\theoremstyle{remark}
\newtheorem*{rmk}{Remark}
\newtheorem{emp}{Example}[section]
\begin{document}
\title[Finiteness of Pythagoras numbers]{Finiteness of Pythagoras numbers of finitely generated real algebras}
\author{Yi Ouyang$^{1,2}$ , Qimin Song$^1$ and Chenhao Zhang$^1$}
\address{$^1$School of Mathematical Sciences, Wu Wen-Tsun Key Laboratory of Mathematics,   University of Science and Technology of China, Hefei 230026, Anhui, China}

\address{$^2$Hefei National Laboratory, University of Science and Technology of China, Hefei 230088, China}

\email{yiouyang@ustc.edu.cn}	
\email{sqm2020@mail.ustc.edu.cn}
\email{chhzh@mail.ustc.edu.cn}

\subjclass[2020]{14P05, 14P25, 26C05, 26C99}
\keywords{Pythagoras number, real algebra, real topology, sum of squares}

\thanks{Partially supported by NSFC (Grant No. 12371013) and  Innovation Program for Quantum Science and Technology (Grant No. 2021ZD0302902).}

\begin{abstract} In this paper, we establish two finiteness results and propose a conjecture concerning the Pythagoras number $P(A)$ of a finitely generated real algebra $A$. 
Let $X \hookrightarrow \bP^n$ be an integral projective surface over $\R$, let $\widetilde{X}$ be the normalization of $X$, and let $s \in \Gamma(X,\bO_X(1))$ be a nonzero section such that \( \bigl(\widetilde{X}_{s=0}\bigr)^{\red} \) is formally real.
We prove $P\bigl(\Gamma(X_{s\neq 0})\bigr)=\infty$.
As a corollary, the Pythagoras numbers of integral smooth affine curves over $\R$ are shown to be unbounded.
For any finitely generated $\R$-algebra $A$, if the Zariski closure of the real points of $\Spec(A)$ has dimension less than two, we demonstrate $P(A)<\infty$.	
\end{abstract}
\maketitle

\section{Introduction}
The question of representing positive integers as sums of squares  can be traced back to the work of the Greek mathematician Diophantus in the 3rd century.
Fermat discovered that every prime of the form $4n+1$ is uniquely expressible as the sum of two squares. In $1770$, Lagrange proved his famous four-square theorem that every positive integer can be expressed as the sum of the squares of four integers.

Hilbert's $17\textsuperscript{th}$ Problem asked whether a nonnegative real multivariate polynomial can be represented as a sum of squares of rational functions.
This problem was solved positively by E. Artin.
Moreover, Pfister showed that such polynomials can be represented as sums of $2^n$ squares of rational functions.

Now questions about sums of squares over certain real algebras can be asked in two directions: the representability of elements as sum of squares, and the least number of squares in the sums if they are representable.
 
For the first question, Scheiderer~\cite{Sche01,Sche06} proved that a nonnegative function on a formally real smooth affine surface is a sum of squares when the real part of the surface is compact.
Blekherman, Smith, and Velasco~\cite{Blek16} proved that every nonnegative quadratic form on a formally real integral projective scheme $Y \hookrightarrow \bP^n$ is a sum of squares of linear forms if and only if $Y \hookrightarrow \bP^n$ is a variety of minimal degree.

The second question is about the Pythagoras numbers of real algebras.
Denote the set of elements that can be represented as sums of squares in a commutative ring $A$ by $S(A)$.
Recall that the length $l(a)$ of an element $a \in S(A)$ is the least number of squares representing $a$, and the Pythagoras number $P(A)$ is defined to be $P(A)=\sup\{l(a)\mid a\in S(A)\}$. 

It has been found that there are polynomials in $\R[x,y]$ with arbitrarily large lengths in \cite[Theorem 4.16]{Choi82}, in other words,  $P\bigl(\R[x,y]\bigr)=\infty$.
Furthermore, it was proved in \cite{Choi82} that the Pythagoras number of any regular local ring with formally real residue field of dimension greater than two is infinite.
Examples of real algebras with finite Pythagoras numbers were also given in \cite[Theorem 3.1]{Choi82} and \cite[Theorem 5.20]{Choi82}.
Benoist~\cite{Beno20} showed that $P\bigl(\R((x_1,\dots , x_n))\bigr) \leq 2^{n-1}$, confirming a conjecture raised in \cite{Choi82}.
Hu~\cite{Hu15} proved that $P\bigl(\R((x_1,x_2 , x_3))\bigr)=4$ and the assumption   $P\bigl(\R(x_1,\dots , x_{n-1})\bigr) = 2^{n-1}$ which is Pfister's Conjecture for the case $n-1$ implies the inequality $P\bigl(\R((x_1,\dots , x_n))\bigr) \geq 2^{n-1}$ (and hence the equality $P\bigl(\R((x_1,\dots , x_n))\bigr) = 2^{n-1}$ by Benoist's result). Recently, B\l{}achut and Kowalczyk~\cite{Blac24}  constructed a sequence of polynomials with arbitrarily large lengths and hence proved that the Pythagoras numbers of some algebras like $\R\left[x,y,\sqrt{f(x,y)}\right]$ are infinite. They also proposed a conjecture predicting that the Pythagoras numbers  for more general surfaces are infinite.

In this paper, we first use the geometric method to characterize all possible finitely generated real algebras with finite Pythagoras numbers in the context of \cite[Conjecture 3.3]{Blac24} and Conjecture~\ref{conj1}.
We prove that the Pythagoras number of a finitely generated algebra over $\R$ is finite if the dimension of its formally real part is less than two in Corollary~\ref{cDl1}.

Let $X \hookrightarrow \bP^n$ be an integral projective surface over $\R$, let $\widetilde{X}$ be the normalization of $X$, and let $s \in \Gamma(X,\bO_X(1))$ be a nonzero section such that \( \bigl(\widetilde{X}_{s=0}\bigr)^{\red} \) is formally real.
Applying a real analogue of Bertini's Theorem to construct a sequence of elements whose lengths tend to infinity in Theorem~\ref{MT}, thereby proving $P(X_{s\neq 0})=\infty$. We also give several consequences of this theorem.
First, for every finitely generated formally real field extension $F/\R$ with transcendence degree two, by Corollary~\ref{cnBT} there exists a finitely generated integral $\R$-algebra $A$ satisfying $\Frac(A)=F$ and $P(A)= \infty$. This partially resolves \cite[Problem 4]{Choi82}.
Second, if $X$ is normal, the Pythagoras numbers of integral smooth affine curves contained in $X$ are shown to be unbounded, which completely settles \cite[Problem 3]{Choi82}.
Third, for any polynomial
$f \in \R[x,y,z]$ whose leading homogeneous component $f_0$ is square-free, if all irreducible factors of $f_0$ change sign in $\R^3$, then $P\bigl(\R[x,y,z]/(f)\bigr)=\infty$.

Let $A$ be a finitely generated algebra over $\R$, and let $W$ denote the Zariski closure of the real points of $\Spec(A)$.
Based on observation from Corollary~\ref{cDl1} and Theorem~\ref{MT}, we propose the conjecture that $P(A) <\infty$ if and only if $\dim(W)<2$. 
And we also provide equivalent formulations of this conjecture in Proposition \ref{pCt1}.

\section{Notations}

We assume all commutative rings appearing in this paper are unitary. For a commutative ring $A$, define
	\begin{equation*}
			S_n(A) := \Bigl\{x=\sum_{i=1}^{n}a_i^2 \Big| a_i \in A\Bigr\}, \qquad
			S(A)  := \bigcup_{n\geq 1} S_n(A).
	\end{equation*}
The length $l_A(a)$ of an element $a\in S(A)$ is
	\[ l_A(a)=l(a):=\min\{n \in \N\mid a\in S_n(A)\}. \]	
The Pythagoras number $P(A) \in \N \cup \{\infty\}$ of $A$ is defined as
	\[ P(A):=\sup \{l_A(a)\mid a \in S(A)\}.\]

For a scheme $X$, the Pythagoras number is $P(X)=P\bigl(\Gamma(X,\bO_X)\bigr)$.

\begin{defn}	
A commutative ring $A$ is called formally real if
	\[ S(A) \cap \bigl(-S(A)\bigr) = \{0\}. \]
A point $p$ in a scheme is called formally real if the residue field of $p$ is formally real.	
\end{defn}

Note that a field $k$ is formally real if and only if $-1$ cannot be represented as a sum of squares, which implies that $\chara(k)=0$.
A formally real field is equivalent to an ordered field (see\cite[Chapter VIII, Proposition 1.3]{Lam05}).

\begin{defn}
Let $X$ be a Noetherian scheme over a field $k$.
\begin{enumerate}[i]
 \item $X$ is called formally real if $X=X^{\red}\neq\emptyset$ and the generic points of $X$ are formally real, and called nonreal if otherwise.
	
 \item	$X$ is called purely nonreal if no generic point of $X$ is formally real.
\end{enumerate}
\end{defn}

Let $X$ be an algebraic scheme over $\C$.
Then there is a natural complex topology on $X(\C)$ that gives a complex analytic structure on $X(\C)$ (see \cite[APPENDIX B.1]{Hart77} and \cite[\S 2.5]{Serr56}).
If $X$ is a quasi-projective scheme, then there is an immersion $X \to \bP^n$.
The complex topology on $X(\C)$ is then simply the subspace topology induced by the standard complex topology on $\bP^n(\C)$.

\begin{defn}
Let $X$ be a formally real algebraic scheme over $\R$.
Let $\mathcal{L}$ be an invertible sheaf on $X$ and $s \in \Gamma (X,\mathcal{L})$.

We say that $s$ does not change sign in $X$ if, for an affine cover $X_{\sm}=\bigcup_{i=1}^nU_i$, where $\mathcal{L}$ has a trivialization on each $U_i$, the function $s|_{U_i}$ does not change sign in each connected component of $U_i(\R)$ under the standard topology.
Otherwise, we say that $s$ changes sign in $X$.
\end{defn}
Let $X$ be a scheme.
Let $\mathcal{L}$ be an invertible sheaf on $X$ and $s \in \Gamma (X,\mathcal{L})$.
We denote the zero locus of $s$ in $X$ by $X_{s=0}$.
If $s$ is a regular section, we also use $\zZ(s)$ instead of $X_{s=0}$ when emphasizing the effective Cartier divisor defined by $s$.
We denote the open subscheme $X\setminus X_{s=0}$ by $X_{s\neq 0}$.

\begin{defn}\label{def2_1}
	Let $A$ be a commutative ring, and let $C \subseteq A$ be a subring.
	A real degree function $d$ on $A$ with constants in $C$ is a map $A \to \Z \cup \{-\infty\}$ satisfying:
	\begin{enumerate}[i]
		\item $d(f)= -\infty \Leftrightarrow f=0$;
		\item $d(C \setminus \{0\})=\{0\}$;
		\item $d(fg)=d(f)+d(g)$ for $f,g \in A$;
		\item $d(f^2+g^2)=2\cdot \max\{d(f),d(g)\}$ for $f,g \in A$.
	\end{enumerate}
If $A$ is a $k$-algebra where $k$ is a formally real field, a real degree function on $A$ is, by default, defined with constants in $k$.	
\end{defn}
\begin{rmk}
If $A$ is a commutative ring with a real degree function $d$, then by (i), (iii) and (iv) of Definition~\ref{def2_1}, $A$ must be a formally real integral domain.
Moreover, for $f, g \in A$, we have
	\[ \begin{split}
		2\cdot \max\{d(f+g),d(f-g)\}=&d\left(\frac{(f+g)^2+(f-g)^2}{2}\right)\\ =&d(f^2+g^2)=2\cdot \max\{d(f),d(g)\}.
	\end{split} \]
	Hence, $d(f+g) \leq \max\{d(f),d(g)\}$. 
    In particular, if $d(f) \neq d(g)$, then $d(f+g) = \max\{d(f),d(g)\}$.
\end{rmk}
\begin{emp} \label{eRDG}
	\begin{enumerate}[a]
		\item For the ring $\R[x_1, \dots , x_m]$, the total degree function $\tdeg$ and the degree functions $\deg_{x_i}$ (for $1\leq i\leq m$) with respect to $x_i$ are all real degree functions.
		\item If the ring  $B=\R[x_1, \dots, x_m]/I$, where $I$ is a homogeneous prime ideal, is formally real, then the total degree function $\tdeg$ on $B$ induced by the grading on $\R[x_1, \dots , x_m]$ is a real degree function.
		\item If $A$ is a subring of $B$ and $d$ is a real degree function on $B$ with constants in $C$,
		then $d|_A$ is a real degree function on $A$ with constants in $A\cap C$.
		\item If $A$ is a formally real integral domain with a real degree function $d$ with constants in $C$, then its field of fractions $\Frac(A)$ has an induced real degree function $d_{\Frac(A)}$ with constants in $\Frac(C)$ given by 
		 $d_{\Frac(A)}\bigl(\frac{f}{g}\bigr)=d(f)-d(g)$.
	
        \item Let $A$ be a finitely generated formally real integral algebra of positive dimension over $\R$, and let $X$ be an integral projective model of $\Spec(A)$.
        Denote by $X_{\mathrm{frs}}^{(1)}$ the set of height one formally real smooth points in $X$.
        These points correspond to discrete valuation rings of $\Frac(A)$ with formally real residue fields.
        Let $\DEG_{\R}(A)$ denote the set of real degree functions on $A$.
        There exists a natural injective map \[X_{\mathrm{frs}}^{(1)} \hookrightarrow \DEG_{\R}\bigl(\Frac(A)\bigr) \to \DEG_{\R}(A)\] defined by $q \mapsto -v_q$, where $v_q$ is the valuation associated to $q$.
        Note that the set $X_{\mathrm{frs}}^{(1)}$ is uncountable by Corollary~\ref{cnBT}.
	\end{enumerate}
\end{emp}

\section{Formally real points on algebraic schemes}
\begin{prop}\label{prop1_2}
Let $k$ be a field with $\chara(k) \neq 2$, and let $A$ be a finitely generated reduced $k$-algebra. The following are equivalent:
	\begin{enumerate}[1]
		\item $A$ is formally real;
        \item the generic points of $\Spec(A)$ are formally real;
		\item the set of formally real points in $\Spec(A)$ is Zariski dense;
		\item the set of formally real closed points in $\Spec(A)$ is Zariski dense.
	\end{enumerate}
\end{prop}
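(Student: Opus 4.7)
The plan is to prove $(1)\Leftrightarrow(2)\Leftrightarrow(3)$ by elementary commutative algebra, and then to obtain the remaining $(3)\Leftrightarrow(4)$ via the Artin--Lang substitution principle. Note first that if $k$ is itself not formally real, then $-1\in S(k)\subseteq S(A)$ and the same inclusion holds for every residue field, so all four statements fail for $A\neq 0$ and hold vacuously for $A=0$. Hence we may assume $k$ is formally real.

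For $(1)\Leftrightarrow(2)$, let $\mathfrak{p}_1,\dots,\mathfrak{p}_r$ be the minimal primes of the reduced Noetherian ring $A$. The canonical injection $A\hookrightarrow\prod_i\Frac(A/\mathfrak{p}_i)$ realizes $A$ as a subring of a product of fields, so $(2)\Rightarrow(1)$ because a product of formally real fields is formally real. For the converse, suppose $A/\mathfrak{p}_1$ is not formally real, so $1+\sum_j a_j^2\in\mathfrak{p}_1$ for certain $a_j\in A$. If $r=1$ then $\mathfrak{p}_1=0$ and we already have $-1\in S(A)$. If $r\ge 2$, prime avoidance (using the minimality of each $\mathfrak{p}_i$) yields $c\in\bigcap_{i\ge 2}\mathfrak{p}_i\setminus\mathfrak{p}_1$, and
\[
c^2+\sum_j(ca_j)^2 \;=\; c^2\Bigl(1+\sum_j a_j^2\Bigr)
\]
then lies in every minimal prime, hence in the nilradical, hence equals $0$. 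Thus $c^2\in S(A)\cap\bigl(-S(A)\bigr)$ is nonzero, contradicting $(1)$.

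For $(2)\Leftrightarrow(3)$, the key point is that non-formal-realness propagates \emph{upward} along specializations: if $\mathfrak{p}\subseteq\mathfrak{q}$ and $-1\in S(A/\mathfrak{p})$, the surjection $A/\mathfrak{p}\twoheadrightarrow A/\mathfrak{q}$ transports this to $-1\in S(A/\mathfrak{q})$. Thus, if some generic point $\mathfrak{p}_i$ is not formally real, no prime in the irreducible component $V(\mathfrak{p}_i)$ is either, so the formally real locus is confined to the proper closed subset $\bigcup_{j\neq i}V(\mathfrak{p}_j)$, contradicting $(3)$. The reverse implication is immediate since the generic points are themselves formally real and Zariski-dense.

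Since $(4)\Rightarrow(3)$ is tautological, the main obstacle is $(3)\Rightarrow(4)$, and this is the step that appears to require genuine input from real algebra. The plan is: given a nonempty basic open $D(f)\subseteq\Spec(A)$, the localization $A[1/f]$ is again a nonzero finitely generated reduced formally real $k$-algebra by the equivalence with $(1)$, and quotienting by one of its minimal primes reduces the claim to showing that every finitely generated formally real integral $k$-domain $B$ admits a formally real maximal ideal. To produce one, fix an ordering on $\Frac(B)$, let $R$ be its real closure, and let $k_0\subseteq R$ be the real closure of $k$, which is algebraic over $k$. The inclusion $B\hookrightarrow R$ extends to a $k_0$-algebra morphism $B\otimes_k k_0\to R$, and the Artin--Lang Homomorphism Theorem yields a $k_0$-point $B\otimes_k k_0\to k_0$. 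Restricting to $B$ gives a $k$-algebra map $B\to k_0$ whose image is a finitely generated $k$-subalgebra of $k_0$: a domain that is algebraic over $k$, hence a field by Zariski's lemma. Its kernel is the desired formally real maximal ideal, and pulling back gives a formally real closed point in $D(f)$.
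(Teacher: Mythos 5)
Your $(1)\Leftrightarrow(2)$ via prime avoidance is correct and a nice elementary variant of the paper's localization argument, and the Artin--Lang mechanism you propose for producing a formally real closed point is the same device the paper uses for $(3)\Rightarrow(4)$. The gap is in $(3)\Rightarrow(2)$. You correctly observe that $-1\in S(A/\mathfrak{p})$ pushes forward to $-1\in S(A/\mathfrak{q})$ for $\mathfrak{q}\supseteq\mathfrak{p}$, but you then apply this as if ``$\mathfrak{p}$ is not formally real'' were the same as ``$-1\in S(A/\mathfrak{p})$.'' For a domain $B$, being non--formally real means only that some nontrivial sum of squares vanishes in $B$, equivalently $-1\in S(\Frac(B))$; it does \emph{not} imply $-1\in S(B)$. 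Take $A=\Q[x,y]/(x^2+y^2)$ with $k=\Q$: here $x^2+y^2=0$ with $x\neq 0$, so the generic point is not formally real, yet $-1\notin S(A)$ (evaluate at the $\Q$-point $(0,0)$), and indeed the closed point $(x,y)$ has residue field $\Q$ and \emph{is} formally real. So your stated conclusion ``no prime in $V(\mathfrak{p}_i)$ is formally real'' is false, and since your $(3)\Rightarrow(4)$ step leans on $(3)\Rightarrow(1)$ through this link, that step inherits the gap as well.

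The correct statement is weaker but suffices: if $\sum_j a_j^2=0$ nontrivially in $A/\mathfrak{p}_i$ with $a_1\neq 0$, then for any formally real $\mathfrak{q}\supseteq\mathfrak{p}_i$ the image relation in the formally real domain $A/\mathfrak{q}$ forces $\bar a_1=0$, so $\mathfrak{q}$ lies in $V(\tilde a_1)$ for any lift $\tilde a_1\notin\mathfrak{p}_i$. Hence the formally real locus sits inside $V(\tilde a_1)\cup\bigcup_{j\neq i}V(\mathfrak{p}_j)$, a proper closed subset (it misses the irreducible set $V(\mathfrak{p}_i)$), contradicting $(3)$. With this patch your route closes the cycle and, interestingly, avoids the input the paper relies on: the paper instead proves $(4)\Rightarrow(2)$ by passing to the smooth locus and invoking Cohen's structure theorem to see that a regular local ring with formally real residue field has formally real fraction field. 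Once you fix the over-strong specialization claim, you should also rewrite $(3)\Rightarrow(4)$ to use $(3)\Rightarrow(1)$ only after it has been legitimately established (or, more simply, run Artin--Lang directly from a formally real point in $D(f)$, as the paper does, without detouring through $(1)$).
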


\begin{proof} $(1) \Leftrightarrow (2)$.
	Let $M$ be a multiplicative set of $A$.
    Let $M^{-1}S_n(A)$ denote the subset $\{\frac{s}{m^2} \mid s \in S_n(A),\ m \in M\}$ of $S(M^{-1}A)$.
    Let $M^{-1}S(A)= \cup_{n\geq 1} M^{-1}S_n(A)$.
	Then $M^{-1}S(A)=S(M^{-1}A)$, which implies that localizations of formally real algebras are formally real. Hence, we have $(1) \Rightarrow (2)$.
    Let $\eta_i$ be the generic points of $\Spec(A)$, where $i=1, \dots , s$.
	Since $A$ is reduced, the map $A \to \prod_{i=1}^{s}k_{\eta_i}$ is injective. This proves $(2) \Rightarrow (1)$.

\medskip \noindent $(2) \Rightarrow (3)$ is trivial.
	
\medskip \noindent $(3) \Rightarrow (4)$ follows by the Artin-Lang homomorphism. Let $V$ be an affine open subset of $\Spec(A)$. By condition (3), there exists a formally real point $p$ in $V$. Then, $k$ is a subfield of $k_p$ and thus formally real. Let $R'$ be a real closure of $k_p$. By \cite[Chapter VIII, Theorem 2.17]{Lam05}, there is a real closure $R$ of $k$ contained in $R'$. Therefore, there exists an $R$-rational point in $V$ by \cite[Theorem 4.1.2]{Boch98}.
	
\medskip \noindent $(4) \Rightarrow (2)$. Let $B$ be a regular local ring with a formally real residue field $k$. Let $\hat{B}$ be the completion of $B$ at its maximal ideal.
Then $\hat{B}=k[[x_1,\dots ,x_n]]$ by Cohen's structure theorem, hence is formally real. Via the inclusion $B \hookrightarrow \hat{B}$, $B$ and its field of fractions are formally real.

The smooth locus $U$ of $\Spec(A)$ is  dense open subset in $\Spec(A)$. Let $\coprod_{i=1}^{s} U_i$ be the decomposition of connected components of $U$, and let $\eta_i$ be the generic point of $U_i$. Assuming (4), there exists a formally real point $p_i$ in every $U_i$; consequently $\eta_i$ is a formally real point.
\end{proof}

\begin{cor}\label{cRP}
    Let $S=\bigoplus_{d \geq 0}S_d$ be a finitely generated positive-dimensional reduced graded algebra over a field $S_0=k$.
    Then $S$ is formally real if and only if $X=\Proj(S)$ is formally real.
\end{cor}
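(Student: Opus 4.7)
The plan is to apply Proposition~\ref{prop1_2} to both sides of the equivalence, reducing everything to a comparison of residue fields at generic points, and then to prove a lemma on graded fraction rings that identifies the formally real property on the two sides.

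First I would set up the correspondence of generic points. Since $S$ is graded and reduced, every minimal prime of $S$ is homogeneous. With $S_0 = k$ a field, any homogeneous prime $\mathfrak{q}$ satisfies $\mathfrak{q} \cap k = 0$, so $\mathfrak{q} \subseteq S_+$. This shows $S_+$ cannot itself be a minimal prime: if it were, every other minimal prime would sit strictly inside $S_+$, forcing $S_+$ to be the unique minimal prime and hence $S = k$, contradicting $\dim S \geq 1$. Therefore the minimal primes of $S$ correspond bijectively to the generic points of $X = \Proj(S)$. Moreover $X$ is nonempty (from positive dimension) and reduced (from $S$ reduced), verifying the non-triviality parts of the formally real condition on $X$.

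The key lemma is: for any graded integral domain $T$ over a field $k = T_0$ with $T_+ \neq 0$, and any nonzero homogeneous $u \in T_+$, $\Frac(T)$ is formally real if and only if $\Frac(T_{(u)})$ is formally real. To prove it I would form the graded fraction ring
\[ L_{\mathrm{gr}} := \{a/b \mid a,b \in T \text{ homogeneous},\ b \neq 0\} \subseteq \Frac(T), \]
and verify $\Frac(L_{\mathrm{gr}}) = \Frac(T)$ together with $(L_{\mathrm{gr}})_0 = \Frac(T_{(u)})$ (the latter via the rewriting $a/b = (ab^{d-1}/u^m)/(b^d/u^m)$ whenever $\deg a = \deg b = m$ and $\deg u = d$). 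Picking $v \in L_{\mathrm{gr}}$ of minimal positive degree $e$, each graded piece $(L_{\mathrm{gr}})_{ne}$ is one-dimensional over $(L_{\mathrm{gr}})_0$ (any ratio of two same-degree elements lies in degree $0$) and spanned by $v^n$; hence $L_{\mathrm{gr}} = (L_{\mathrm{gr}})_0[v,v^{-1}]$ and $\Frac(T) = (L_{\mathrm{gr}})_0(v)$. Because $\dim T = \dim T_{(u)} + 1$, the element $v$ is transcendental over $\Frac(T_{(u)})$, so $\Frac(T)$ is a purely transcendental extension of degree one over $\Frac(T_{(u)})$, and formal reality is preserved under such extensions.

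Combining: by Proposition~\ref{prop1_2}, $S$ is formally real iff $\Frac(S/\mathfrak{p})$ is formally real for every minimal prime $\mathfrak{p}$ of $S$; by the first step these primes are exactly the generic points of $X$; and by the lemma applied to each $T = S/\mathfrak{p}$, this is equivalent to the residue fields $\Frac((S/\mathfrak{p})_{(u)})$ being formally real, which together with the non-triviality conditions is precisely the formally real condition on $X$. The main obstacle I anticipate is cleanly verifying the lemma, in particular confirming $(L_{\mathrm{gr}})_0 = \Frac(T_{(u)})$ for arbitrary $u$ (so that $u$ need not have minimal degree) and establishing that each graded piece $(L_{\mathrm{gr}})_{ne}$ is free of rank one.
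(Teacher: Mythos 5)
Your proof is correct and arrives at the same key point as the paper's, but by a genuinely more explicit route. The paper introduces the affine cone $\phi:\A(X)=\Spec(S)\setminus\{S_+\}\to X$ and observes that its fiber over any $p\in X$ is $\A^1_{k_p}\setminus\{0\}$; hence the residue field at a generic point of $\Spec(S)$ is $k_\eta(t)$ where $k_\eta$ is the residue field at the corresponding generic point of $X$, and Proposition~\ref{prop1_2} finishes. You reach exactly the same conclusion --- that $\Frac(S/\mathfrak{p})$ is a degree-one purely transcendental extension of the residue field $\Frac\bigl((S/\mathfrak{p})_{(u)}\bigr)$ of $X$ at the corresponding generic point --- but you prove it by hand inside the graded fraction ring $L_{\mathrm{gr}}$, identifying $(L_{\mathrm{gr}})_0=\Frac(T_{(u)})$ and $L_{\mathrm{gr}}=(L_{\mathrm{gr}})_0[v,v^{-1}]$. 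Your version is more elementary and self-contained (no appeal to the fibration structure of the cone), and your rewriting trick $a/b=(ab^{d-1}/u^m)/(b^d/u^m)$ does correctly show $(L_{\mathrm{gr}})_0=\Frac(T_{(u)})$ for arbitrary homogeneous $u$, which you flagged as the worry. One small improvement: rather than invoking $\dim T=\dim T_{(u)}+1$ (which requires $T$ finitely generated and is heavier than needed), you can conclude transcendence of $v$ directly from the graded isomorphism $(L_{\mathrm{gr}})_0[t,t^{-1}]\xrightarrow{\sim} L_{\mathrm{gr}}$, $t\mapsto v$, which is bijective because it is bijective on each graded piece --- a fact you have already established.
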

\begin{proof}
    Denote the prime ideal $\bigoplus_{d > 0}S_d$ by $S_+$.
    Since $X$ is quasi-compact, it is covered by finitely many standard open affine subschemes $D_{+}(f_i)$, where $i=1,\dots, n$ and each $f_i$ is a homogeneous element in $S_+$.
    By \cite[\href{https://stacks.math.columbia.edu/tag/00JP}{Tag 00JP}]{stacks-project}, $\sqrt{(f_1,\dots, f_n)}=S_+$.
    Denote $\Spec(S)\setminus\{S_+\}$ by $\A(X)$.
    Then we obtain an affine cone $\phi :\A(X) \to X$ defined locally by $S_{(f_i)} \to S_{f_i}$.
    
    Since the irreducible components of $X$ correspond to minimal homogeneous prime ideals of $S$, each generic point of $\A(X)$ maps to a generic point of $X$.  
    The fiber of $\phi$ at a point $p \in X$ is $\A^{1}_{k_p}\setminus\{0\}$.
    Thus, the generic points of $\Spec(S)$ are formally real if and only if the generic points of $X$ are formally real.
\end{proof}

\begin{lem} \label{lZtL}
    Let $X$ be a separated algebraic scheme over $\R$. Let $\dim(X)$ be the Krull dimension of $X$ and $\dim\bigl(X(\R)\bigr)$ be the Lebesgue covering dimension of $X(\R)$. 
    Then $\dim(X) \geq \dim\bigl(X(\R)\bigr)$.
    
    Moreover, if $X$ is irreducible, then $\dim(X)=\dim\bigl(X(\R)\bigr)$ if and only if $X^{\red}$ is formally real.
\end{lem}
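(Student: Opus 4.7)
The plan is to establish $\dim(X) \geq \dim(X(\R))$ by reducing to affine pieces and invoking the standard identification between Lebesgue covering dimension and Krull dimension of the Zariski closure for semialgebraic sets, and then to handle the equivalence for irreducible $X$ by producing a smooth real point in one direction and a sum-of-squares relation confining the real locus in the other. For the inequality, I would cover $X$ by finitely many affine opens $U_i=\Spec(A_i)$; a choice of generators realizes each $U_i(\R)$ as a semialgebraic subset of some $\R^{n_i}$ whose Zariski closure is contained in $U_i$. By the semialgebraic dimension theorem (\cite[Theorem 2.8.8]{Boch98}), $\dim U_i(\R)\le\dim U_i\le\dim X$, and since $X(\R)$ is the finite union of the open pieces $U_i(\R)$, its covering dimension equals $\max_i \dim U_i(\R)\le\dim X$.

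For the ``if'' direction, assume $X$ is irreducible with $X^{\red}$ formally real. The argument of Proposition~\ref{prop1_2}, combined with $\R$ being real closed, produces an $\R$-rational point $p$ in the smooth locus of $X^{\red}$: Proposition~\ref{prop1_2} gives formally real closed points Zariski dense in the open smooth locus, and the Artin--Lang homomorphism theorem promotes any such point to an $\R$-rational one. Presenting $X^{\red}$ locally near $p$ as $\Spec\bigl(\R[x_1,\dots,x_n,y_1,\dots,y_m]/(f_1,\dots,f_m)\bigr)$ with the Jacobian of $(f_1,\dots,f_m)$ in the $y$-variables invertible at $p$ and $n=\dim(X)$, the real implicit function theorem supplies a Euclidean-open neighborhood of $p$ in $X(\R)$ homeomorphic to an open ball in $\R^n$. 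Hence $\dim(X(\R))\ge n=\dim(X)$, and combined with the previous step this yields equality.

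For the ``only if'' direction, assume $X$ is irreducible but $X^{\red}$ is not formally real. Its function field $K$ then satisfies $-1\in S(K)$, and in any affine open $U=\Spec(A)\subseteq X^{\red}$ with $\Frac(A)=K$, clearing denominators in a representation $-1=\sum_i(b_i/c)^2$ yields an identity $c^2+\sum_i b_i^2=0$ in $A$ with $c\neq 0$. At any $x\in U(\R)$ this forces $c(x)=0$, so $U(\R)\subseteq V(c)(\R)$, where $V(c)\subsetneq U$ has strictly smaller Krull dimension. Applying the first inequality to $V(c)^{\red}$ gives $\dim U(\R)<\dim X$, and covering $X^{\red}$ by finitely many such $U$ together with the identity $X(\R)=X^{\red}(\R)$ yields $\dim(X(\R))<\dim(X)$.

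The main points needing care are, in the ``if'' direction, the extraction of an $\R$-rational \emph{smooth} point (which reduces to combining the Zariski openness and nonemptiness of the smooth locus with Artin--Lang applied inside that locus), and, in the first step, the invocation of the semialgebraic dimension theorem to translate Lebesgue dimension into algebraic dimension; both are standard in real algebraic geometry but deserve explicit citation.
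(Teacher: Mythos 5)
Your proof is correct, but the first inequality is established by a genuinely different route. The paper proceeds by Noetherian induction on the Krull dimension, decomposing $X(\R)$ into $X^{\red}_{\sm}(\R)$ and $X_{\sing}(\R)$, bounding the former via the implicit function theorem and point-set dimension theory (the decomposition and countable sum theorems from Engelking), and handling the latter by induction; this packaging makes the ``only if'' part of the second claim come out automatically, since $X^{\red}_{\sm}(\R)=\emptyset$ when $X^{\red}$ is nonreal. You instead reduce to affine pieces and invoke semialgebraic dimension theory to identify $\dim U_i(\R)$ with the Krull dimension of its Zariski closure, which is more economical but leans on a bigger black box; one thing to make explicit is that \cite[Theorem 2.8.8]{Boch98} concerns the \emph{semialgebraic} dimension of $U_i(\R)$, and its agreement with the Lebesgue covering dimension is a separate (standard) fact that needs its own citation. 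Your ``if'' direction (Artin--Lang to get a smooth $\R$-point, then implicit function theorem for a Euclidean ball of dimension $\dim X$) matches the paper. Your ``only if'' direction --- clearing denominators in a representation of $-1$ to produce $c^2+\sum b_i^2=0$ and then confining $U(\R)$ to $V(c)$ --- is a clean explicit version of what the paper leaves implicit in its induction; both work. Overall: correct, with the first inequality argued via a different (semialgebraic) toolkit.
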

\begin{proof}
    Since $X$ is separated, $X(\C)$ is Hausdorff.
    Moreover, $X(\C)$ is second countable and locally metrizable.
    Therefore, by the Nagata-Smirnov metrization theorem, $X(\C)$ is metrizable.
    As a subspace of $X(\C)$, $X(\R)$ is metrizable.
    
    We proceed by induction on the Krull dimension of $X$.
    If $\dim(X)=-1$, then $X=\emptyset$ by definition.
    Therefore, $\dim\bigl(X(\R)\bigr)=-1$.
    
    Denote the smooth locus of $X^{\red}$ by $X^{\red}_{\sm}$.
    If there is a $p \in X^{\red}_{\sm}(\R)$ then, by implicit function theorem, there exists a open subset $U_p \ni p$  in $X^{\red}_{\sm}(\R)$, such that $\R^{\dim_p(X)}\xrightarrow{\sim} U_p$.
    Note that $\dim_p(X)\leq \dim(X)$ is the Krull dimension of $X$ at $p$ (see \cite[\href{https://stacks.math.columbia.edu/tag/0055}{Tag 0055}]{stacks-project}).
    By the fundamental theorem of dimension theory, $\dim(U_p)=\dim_p(X)$ (see \cite[1.8.2]{Enge78}).
    Hence, by \cite[1.1.1]{Enge78} and \cite[1.7.7]{Enge78}, $\dim\bigl(X^{\red}_{\sm}(\R)\bigr) \leq \dim(X)$.
    Since $X(\R)$ is a metric space, $X^{\red}_{\sm}(\R)$ is a $F_{\sigma}$ subset. 
    Thus, \[\dim\bigl(X(\R)\bigr)\leq \max\bigl\{\dim\bigl(X^{\red}_{\sm}(\R)\bigr),\dim\bigl(X_{\sing}(\R)\bigr) \bigr\} \leq \dim(X)\] by \cite[1.5.4]{Enge78}, \cite[1.7.7]{Enge78} and induction.
    
    If $X^{\red}$ is irreducible and formally real, then $X^{\red}_{\sm}(\R) \neq \emptyset$.
    For every $p \in X(\C)$, $\dim_p(X) = \dim(X)$.
    Thus, $\dim(X)=\dim\bigl(X(\R)\bigr)$ by \cite[1.1.2]{Enge78}.
\end{proof}

\begin{lem}\label{lSaR}
Let $X$ be a formally real integral separated algebraic scheme over $\R$. For $\mathcal{L} \in \Pic(X)$ and $0 \neq s \in \Gamma(X, \mathcal{L})$, $s$ does not change sign in $X$ if $X_{s=0}$ is purely nonreal.

Moreover, if $X$ is regular in codimension one and $X_{s=0}$ is reduced, then the converse is true.
\end{lem}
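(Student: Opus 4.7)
The plan is to prove both directions using Lemma~\ref{lZtL} and Proposition~\ref{prop1_2} as the main inputs.

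For the first implication, I argue the contrapositive. Suppose $s$ changes sign: there is an affine open $U \subseteq X_{\sm}$ on which $\mathcal{L}$ is trivial, and under this trivialization $s|_U$ corresponds to a regular function $f$ taking both positive and negative values on some connected component $C$ of $U(\R)$. The real zero locus $\{f = 0\} \cap C$ lies inside $X_{s=0}(\R)$ and partitions the two nonempty disjoint open subsets $\{f > 0\} \cap C$ and $\{f < 0\} \cap C$ of the smooth real $d$-manifold $C$, where $d = \dim X$. The separation theorem in dimension theory (Engelking) then gives $\{f=0\}\cap C$ Lebesgue covering dimension at least $d-1$. Since $X_{s=0}$ is an effective Cartier divisor on the integral scheme $X$, it is equidimensional of Krull dimension $d-1$; combining these with Lemma~\ref{lZtL} applied to each irreducible component forces some component $Y$ to satisfy $\dim(Y(\R)) = \dim(Y)$. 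The second assertion of Lemma~\ref{lZtL} then makes $Y^{\red}$ formally real, so the generic point of $Y$ is formally real, contradicting the purely nonreal hypothesis on $X_{s=0}$.

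For the converse, assume $X$ is regular in codimension one, $X_{s=0}$ is reduced, and some irreducible component $Y$ of $X_{s=0}$ is formally real; I will produce a real point where $s$ visibly changes sign. Consider the locus $Y^\circ \subseteq Y$ of points $q$ where (a) $X$ is regular at $q$, (b) $Y$ is smooth at $q$, and (c) no other component of $X_{s=0}$ meets $q$. Each condition is open and generic in $Y$: (a) because $X_{\sing}$ has codimension $\geq 2$ in $X$ and $Y$ has codimension $1$, so $Y \not\subseteq X_{\sing}$; (b) because $Y$ is reduced in characteristic zero, hence generically smooth; (c) because distinct components intersect in lower-dimensional loci. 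Thus $Y^\circ$ is dense open in $Y$. Since $Y$ is formally real, Proposition~\ref{prop1_2} supplies an $\R$-rational point $q \in Y^\circ$ (formally real closed points of a finitely generated real algebra are $\R$-rational). At $q$, smoothness of $X$ and of $Y$ means a local equation $g \in \bO_{X,q}$ of $Y$ lies in $\mathfrak{m}_{X,q} \setminus \mathfrak{m}_{X,q}^2$, so by the implicit function theorem $g$ serves as a local analytic coordinate on $X(\R)$ at $q$ and necessarily changes sign on every neighborhood of $q$. Because $X_{s=0}$ is reduced and only the component $Y$ passes through $q$, the local equation of $s$ factors as $g \cdot u$ with $u \in \bO_{X,q}^{\times}$, of constant sign near $q$. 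Hence $s$ changes sign on the connected component of $q$ in any sufficiently small trivializing affine $U \ni q$.

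The main obstacle I expect is the dimension estimate in the first direction: translating the purely topological sign change into the quantitative bound $\dim(X_{s=0}(\R)) \geq \dim(X) - 1$, so that Lemma~\ref{lZtL} can be used in reverse to extract a formally real generic point. The rest of the argument reduces to standard local analytic computations combined with the Artin--Lang density already encoded in Proposition~\ref{prop1_2}.
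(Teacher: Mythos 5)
Your proof is correct and takes essentially the same approach as the paper's: both directions hinge on Lemma~\ref{lZtL}, Proposition~\ref{prop1_2}, Engelking's dimension theory, and the implicit function theorem. The only differences are cosmetic --- you invoke the partition/separation theorem directly on the connected $d$-manifold $C$ where the paper first passes to a chart homeomorphic to $\R^d$, and in the converse you phrase the local computation as a factorization $s = g\cdot u$ with $u$ a unit where the paper phrases the same implicit-function-theorem step via nonvanishing of the gradient of $s$ along the smooth locus of $X_{s=0}$.
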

\begin{proof}
If $s$ changes sign in $X$, then there exists an affine open subscheme $U \subseteq X_{\sm}$ and a trivialization $\mathcal{O}_U \xrightarrow{\sim} \mathcal{L}|_U$ such that $s$ changes sign in a connected open subset $V\subseteq U(\R)$.
The map  $s: U(\R) \to \R$ is continuous with respect to the standard topology.
Let $V_{>}=\{x\in V \mid s(x)<0\}$, $V_{<}=\{x\in V \mid s(x)>0\}$ and $V_{=}=X_{s=0}(\R) \cap V$.

Since the dimension of $X_{s=0}(\R)$ is less than $\dim(V)$ by Lemma~\ref{lZtL}, $X_{s=0}(\R) \cap V$ is a nowhere dense subset of $V$.
Since $V$ is connected, the intersection of boundaries, $V_0=\partial_{V}(V_{>}) \cap \partial_{V}(V_{<})$, is nonempty.

Let $p\in V_0$.
Then $s$ changes sign in every open neighbourhood of $p$.
By implicit function theorem, we may assume $V$ is homeomorphic to $\R^{\dim(X)}$.
Thus, $\dim(V_{=})=\dim(X)-1$ by \cite[1.8.12]{Enge78}.
    It follows that $\dim\bigl(X_{s=0}(\R)\bigr)\geq \dim(X)-1$ by \cite[1.1.2]{Enge78}.
    Note that $\dim(X_{s=0})=\dim(X)-1$.
    Thus, $X_{s=0}(\R)$ is Zariski dense in some irreducible components of $X_{s=0}$ by Lemma~\ref{lZtL}.
    Therefore, $X_{s=0}$ is not purely nonreal by Proposition~\ref{prop1_2}.
    
Now assume that $X$ is regular in codimension one and $X_{s=0}$ is reduced and not purely nonreal .
Note that $X$ is regular in codimension one means that the codimension of $X_{\sing}$ in $X$ is greater than one.

Let $\eta$ be a formally real generic point of $X_{s=0}$.
Note that $\eta \in X_{\sm}$.
Let $W \subseteq X_{\sm}$ be an open affine subscheme containing $\eta$ such that $W \cap X_{s=0}$ is integral smooth and $\mathcal{L}$ is trivial on $W_0$.
There exists $q \in X_{s=0} \cap W(\R)$ because $X_{s=0}$ is formally real.
Then, by implicit function theorem, there exists an open subset $E \in W(\R)$ containing $q$ with a diffeomorphism $E \xrightarrow{\sim} \R^{\dim(X)}$.
Due to the smoothness of $X_{s=0}$, the gradient of $s$ as a function on $E \xrightarrow{\sim} \R^{\dim(X)}$ cannot vanish at any point of $X_{s=0} \cap E$.
Thus, $s$ changes sign on $E$ by inverse function theorem (or by the differential approximation equation).
\end{proof}

Let $V$ be a vector space over $\R$, and let $P$ be a property of vectors in $V$.
We say $P$ holds for general $v \in V$ if there exists a nonempty Zariski open subset $U \subseteq V$ such that $P$ holds for $v \in U$.
We say $P$ holds for sufficiently many $v \in V$ if there exists a nonempty Euclidean open subset $U \subseteq V$ such that $P$ holds for $v \in U$.

Now we establish a real analogue of Bertini's Theorem via Lemma~\ref{lSaR}.
\begin{prop}\label{pNSB}
Let $X$ be a formally real integral separated algebraic scheme over $\R$. 
Let $\mathcal{L}$ be an invertible sheaf.
If $\dim\bigl(\Gamma(X, \mathcal{L})\bigr) \geq 2$, then for sufficiently many sections $s \in \Gamma(X, \mathcal{L})$ the scheme $X_{s=0}$ is not purely nonreal.
\end{prop}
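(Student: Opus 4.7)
The approach is to reduce the proposition to showing that the subset
\[ \mathcal{C} := \{s \in \Gamma(X,\mathcal{L}) \mid s \text{ changes sign in } X\} \]
is nonempty and Euclidean open in $\Gamma(X,\mathcal{L})$; by Lemma~\ref{lSaR}, every $s\in\mathcal{C}$ has $X_{s=0}$ not purely nonreal. Openness is immediate from continuity of evaluation: if $s$ changes sign, witnessed by points $p_+,p_-$ in a common connected component of some $U(\R)\subseteq X_{\sm}(\R)$ with $U$ trivializing $\mathcal{L}$ and $s(p_+)>0>s(p_-)$, then evaluation at the fixed points $p_\pm$ is a linear, hence continuous, functional on $\Gamma(X,\mathcal{L})$, so sections near $s$ witness the same sign change.

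To produce an element of $\mathcal{C}$, fix linearly independent $s_0,s_1\in\Gamma(X,\mathcal{L})$ using the hypothesis $\dim\Gamma(X,\mathcal{L})\geq 2$, and assume $s_0$ itself does not change sign on $X$, as otherwise we are done. By Proposition~\ref{prop1_2}, $X_{\sm}(\R)$ is Zariski dense in $X$, so we may pick a smooth real point $p$ with $s_0(p)\neq 0$. Shrinking, let $U\subseteq X_{\sm}$ be an affine open neighborhood of $p$ on which $\mathcal{L}$ is trivialized, let $C$ be the connected component of $p$ in $U(\R)$, and, replacing $s_0$ by $-s_0$ if necessary, assume $s_0(p)>0$; since $s_0$ does not change sign, $s_0\geq 0$ throughout $C$. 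The plan is to find $\lambda\in\R$ such that $s_\lambda:=s_1-\lambda s_0$ changes sign on $C$.

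To that end, consider the regular function $g=s_1/s_0$ on the open subset $V:=U\cap\{s_0\neq 0\}$ of $X$. The crucial step is to prove $g$ is non-constant on $C\cap V(\R)$. Since $V$ is a nonempty open of the formally real integral $X$, Lemma~\ref{lZtL} yields $\dim\bigl(V(\R)\bigr)=\dim V$, so $C\cap V(\R)$ is a Euclidean open subset of $V(\R)$ of full real dimension. Any proper closed subvariety of $V$ has strictly smaller Krull dimension, and so, again by Lemma~\ref{lZtL}, its real locus has strictly smaller covering dimension and therefore cannot contain $C\cap V(\R)$. Hence $C\cap V(\R)$ is Zariski dense in $V$. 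If $g$ were identically some constant $c$ on this set, then $g-c$ would vanish on a Zariski dense subset of the integral scheme $V$, hence on all of $V$, forcing $s_1=cs_0$ on $V$ and, by irreducibility of $X$ together with the density of $V$ in $X$, on $X$---contradicting linear independence.

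With $g$ non-constant on $C\cap V(\R)$, pick $p_1,p_2$ there realizing two distinct values $\alpha:=g(p_1)<g(p_2)=:\beta$, and choose any $\lambda\in(\alpha,\beta)$. Since $s_0(p_i)>0$, one computes $s_\lambda(p_1)=(\alpha-\lambda)s_0(p_1)<0$ and $s_\lambda(p_2)=(\beta-\lambda)s_0(p_2)>0$, so $s_\lambda$ changes sign on the connected set $C$; thus $s_\lambda\in\mathcal{C}$, and an application of Lemma~\ref{lSaR} finishes the proof. The main obstacle I expect is the real-analytic-continuation-type step---namely, that a Euclidean open piece of $V(\R)$ is automatically Zariski dense in $V$---which is exactly where the formal reality of $X$ is used, through Lemma~\ref{lZtL}.
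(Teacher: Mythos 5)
Your proof is correct and takes essentially the same approach as the paper: both reduce to producing a sign-changing section via Lemma~\ref{lSaR}, both exploit the Zariski density of a connected component of $X_{\sm}(\R)$ (justified through Lemma~\ref{lZtL}) to see that a two-dimensional space of sections remains two-dimensional upon restriction there, and both obtain openness from continuity of evaluation at fixed real points. The only cosmetic difference is that the paper exhibits two points $p,q$ at which the $2\times 2$ evaluation matrix $\bigl(s_i(p),s_i(q)\bigr)_{i=1,2}$ is invertible, while you argue through non-constancy of the rational function $s_1/s_0$ on the component --- the same linear-algebraic fact in different clothing.
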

\begin{proof}
We need to find an element $s'$ in $\Gamma(X, \mathcal{L})$ such that $X_{s'=0}$ is not purely nonreal and prove the existence of a Euclidean open subset $W$ containing $s'$.

Let $U \subseteq X_{\sm}$ be an affine open subscheme, and let $\{s_i\}_{i=1,\dots,n}$ be an $\R$-basis of $\Gamma(X, \mathcal{L})$.
Let $V$ be a connected component of $U(\R)$.
Then every nonzero linear combination of $s_1$ and $s_2$ is not zero on $V$, because $s_1$ and $s_2$ are linear independent and $V$ is Zariski dense in $X$.
Thus, we have $p \in V$ such that $(s_1(p), s_2(p))\neq 0$, and we have $q\in V$ such that $s_1(p)s_2(q)-s_2(p)s_1(q) \neq 0$.
Then the matrix $\begin{pmatrix}
                   s_1(p) & s_2(p) \\
                   s_1(q) & s_2(q)
                 \end{pmatrix}$ is invertible.
Thus, there exists $k_1, k_2 \in \R$, such that $(k_1s_1+k_2s_2)(p)<0$ and $(k_1s_1+k_2s_2)(q)>0$.
    
The functions   $e_p(x)=\sum_{i=1}^{n}s_i(p)x_i$ and $e_q(x)=\sum_{i=1}^{n}s_i(q)x_i$ are linear and hence continuous on $\R^n$. For $v=(k_1,k_2,0,\dots,0)$, one has $e_p(v) < 0$ and $e_q(v) > 0$. Thus, there exists an open subset $W \in \Gamma(X, \mathcal{L})$ containing $v$ such that $e_p(W)\subseteq (-\infty, 0)$ and $e_q(W)\subseteq (0, +\infty)$.
    By Lemma \ref{lSaR}, for each $s \in W$, $X_{s=0}$ is not purely nonreal.
\end{proof}

\begin{cor}\label{cnBT}
    Let $X \hookrightarrow \bP^n_{\R}$ be a formally real integral quasi-projective scheme of dimension greater than one over $\R$.
    Then for sufficiently many sections $H \in \Gamma(\bP^n_{\R}, \bO_{\bP^n}(d))$ the scheme $X \cap H$ is formally real and integral.
    Consequently, for sufficiently many sections $s \in \Gamma(X, \bO_X(d))$ the scheme $X_{s=0}$ is formally real and integral.
\end{cor}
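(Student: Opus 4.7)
The basic idea is to intersect a nonempty Zariski open subset provided by classical Bertini (giving integrality of $X\cap H$) with a nonempty Euclidean open subset provided by Proposition~\ref{pNSB} (giving ``not purely nonreal''). These conditions are compatible because any nonempty Zariski open subset of a real vector space is Euclidean dense, so the two open sets must meet; and on an integral scheme, having a formally real unique generic point is equivalent to being formally real.

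First I would verify that $X$ is geometrically integral: since $K(X)$ is formally real, $-1$ is not a sum of squares, hence not a square, in $K(X)$, so $K(X)\otimes_{\R}\C\cong K(X)[\sqrt{-1}]$ is a field; together with reducedness of $X$ and characteristic zero, this forces $X_{\C}$ to be integral. The classical Bertini irreducibility theorem, applied via the Veronese embedding $\bP^n\hookrightarrow\bP^N$ to convert degree-$d$ hypersurfaces into hyperplanes, then yields a nonempty Zariski open subset $W_1\subseteq V:=\Gamma(\bP^n_{\R},\bO_{\bP^n}(d))$ such that $X\cap H$ is geometrically integral, hence integral, for every $H\in W_1$.

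Next I would apply Proposition~\ref{pNSB} with $\mathcal{L}=\bO_X(d)$. The hypothesis $\dim_{\R}\Gamma(X,\bO_X(d))\geq 2$ is verified by observing that the linear restriction map $r\colon V\to\Gamma(X,\bO_X(d))$ has image of dimension at least two, since any two distinct closed points of $X$ can be separated by some degree-$d$ form on $\bP^n$. Arranging the first two basis sections in the proof of Proposition~\ref{pNSB} to lie inside $\mathrm{im}(r)$ ensures that the concrete section $v=k_1 s_1+k_2 s_2$ built there also lies in $\mathrm{im}(r)$, so the Euclidean open neighborhood $W_2'\subseteq\Gamma(X,\bO_X(d))$ supplied by the proposition meets $\mathrm{im}(r)$; therefore $W_2:=r^{-1}(W_2')$ is a nonempty Euclidean open subset of $V$. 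The intersection $W_1\cap W_2$ is then a nonempty Euclidean open subset of $V$, and for every $H\in W_1\cap W_2$ the scheme $X\cap H$ is integral and has a formally real unique generic point, so it is formally real and integral. This proves the first assertion; the second follows by rerunning the argument after re-embedding (the projective closure of) $X$ into $\bP^M$ via the complete linear system of $\bO_X(d)$, under which sections of $\bO_X(d)$ correspond to hyperplane sections.

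The step I expect to be the main obstacle is the compatibility between the Zariski open condition from Bertini and the Euclidean open condition from Proposition~\ref{pNSB}: specifically, ensuring that the Euclidean open subset produced by the proposition has nonempty preimage under $r$, which is what the choice of basis inside $\mathrm{im}(r)$ accomplishes.
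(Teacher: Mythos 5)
Your proof follows essentially the same route as the paper's: both combine a nonempty Zariski‑open locus coming from Bertini‑type theorems for irreducibility (the paper cites Stacks Tags 0FD6 and 0G4F, which together also give reducedness; you invoke classical Bertini via Veronese) with the nonempty Euclidean‑open locus from Proposition~\ref{pNSB}, using that a nonempty Zariski‑open subset of a real vector space is Euclidean‑dense. Your explicit step of choosing $s_1,s_2$ inside $\mathrm{im}(r)$ is exactly what the paper's remark about $\dim\mathrm{im}\bigl(\Gamma(\bP^n,\bO(d))\to\Gamma(X,\bO_X(d))\bigr)\geq 2$ is implicitly for, so the two arguments match.
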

\begin{proof}
    $X$ is geometrically irreducible because the function field of $X$ is formally real.
    A hypersurface that has a smooth proper intersection with $X_{\sm}$ is general in $\Gamma(\bP^n_{\R}, \bO_{\bP^n}(d))$ (see \cite[\href{https://stacks.math.columbia.edu/tag/0FD6}{Tag 0FD6}]{stacks-project}).
    A hypersurface that has a geometrically irreducible intersection with $X$ is general in $\Gamma(\bP^n_{\R}, \bO_{\bP^n}(d))$ (see \cite[\href{https://stacks.math.columbia.edu/tag/0G4F}{Tag 0G4F}]{stacks-project}).
    It's not difficult to see that the dimension of the image of $\Gamma(\bP^n_{\R}, \bO_{\bP^n}(d)) \to \Gamma(X, \bO_X(d))$ is greater than two (or see the first paragraph of the proof of \cite[\href{https://stacks.math.columbia.edu/tag/0G4F}{Tag 0G4F}]{stacks-project}).
    A Zariski open subset of a vector space is dense under the standard topology.
    Thus, by Proposition~\ref{pNSB}, for sufficiently many sections $H \in \Gamma(\bP^n_{\R}, \bO_{\bP^n}(d))$ the scheme $X \cap H$ is formally real and integral.
    
    The proof of $\Gamma(X, \bO_X(d))$ follows the same arguement as above.
\end{proof}

Let $C$ be a one-dimensional integral algebraic scheme over a field $k$.
Let $U$ be an open subscheme of $C_{\sm}$.
Let $\Div(U) \subseteq \Div(C)$ be the set of divisors supported on $U$.
Then there is a natural map $\Div(U) \to \Pic(C)$ by sending a divisor $D$ to the dual $\bO_C(D)$ of its fractional ideal, which is an invertible subsheaf of $\underline{K(C)}$.
Denote $\dim_kH^i(C,\bO_C(D))$ by $h^i_C(D)$, where $D$ is a divisor in $\Div(U)$.
Subscripts of $h^i_C$ may be suppressed if no ambiguity arises.

\begin{lem} \label{lBoG}
    Let $X \hookrightarrow \bP^n_{\R}$ be a projective surface with the Hilbert Polynomial $H_{\bO_X}(x)=\chi(X,\bO_X(x))=\frac{k}{2}x^2+\alpha x+ \beta$.
    Let $C$ be an integral curve in $X$ defined by a section $s \in \Gamma(X,\bO_X(d))$ and $g_a=g_a(C)$ be the arithmetic genus of $C$.
    
    Then $g_a=\frac{k}{2}d^2-\alpha d+1$.
\end{lem}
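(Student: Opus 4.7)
The plan is to compute $\chi(C,\bO_C)$ by twisting the ideal-sheaf sequence of $C$ in $X$, and then to read off the arithmetic genus via $g_a(C)=1-\chi(C,\bO_C)$. First I would verify that $s\in\Gamma(X,\bO_X(d))$ is a regular section: since $X$ is an integral surface and $C=X_{s=0}$ is an integral curve (hence of dimension one), $s$ cannot vanish on the unique generic point of $X$, so $s$ is $\bO_X$-regular and determines an effective Cartier divisor whose ideal sheaf is isomorphic to $\bO_X(-d)$. This yields the short exact sequence
\[
0 \to \bO_X(-d) \to \bO_X \to \bO_C \to 0.
\]

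Next I would apply additivity of the Euler characteristic. Using that $\chi(C,\bO_C)=\chi(X,i_*\bO_C)$ for the closed immersion $i\colon C\hookrightarrow X$, the sequence above gives
\[
\chi(C,\bO_C)=\chi(X,\bO_X)-\chi(X,\bO_X(-d)).
\]
The Euler characteristic $\chi(X,\bO_X(m))$ agrees with the Hilbert polynomial $H_{\bO_X}(m)$ for \emph{every} integer $m$ (this is the Snapper--Kleiman formulation, not merely the asymptotic form for $m\gg 0$). Evaluating at $m=0$ and $m=-d$ yields $\chi(X,\bO_X)=\beta$ and $\chi(X,\bO_X(-d))=\tfrac{k}{2}d^2-\alpha d+\beta$.

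Substituting and invoking the definition $g_a(C)=1-\chi(C,\bO_C)$ gives
\[
g_a(C)=1-\bigl(\beta-(\tfrac{k}{2}d^2-\alpha d+\beta)\bigr)=\tfrac{k}{2}d^2-\alpha d+1,
\]
which is the claimed formula. I do not expect any real obstacle here: the only substantive check is the $\bO_X$-regularity of $s$, which is immediate from integrality of $X$ together with $s\neq 0$. The remainder is bookkeeping with Hilbert polynomials and the standard additivity of $\chi$ along short exact sequences; in particular, no smoothness or Gorenstein hypothesis on $C$ is needed, because we work only with $\chi(\bO_C)$ and never invoke duality.
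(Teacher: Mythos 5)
Your proof is correct, and it takes a cleaner route than the paper's. The paper establishes the short exact sequence only at the level of global sections for $m\gg0$ via Serre Vanishing, derives $\chi(C,\bO_C(m))=H(m)-H(m-d)$ for such $m$, and then recovers $g_a$ by combining the Riemann--Roch degree formula $\deg(\bO_C(m))=\chi(C,\bO_C(m))-\chi(C,\bO_C)$ with B\'ezout's theorem ($\deg(\bO_C(1))=kd$). You bypass all of this: you take Euler characteristics directly across the ideal-sheaf sequence $0\to\bO_X(-d)\to\bO_X\to\bO_C\to 0$ and plug in $m=0$ and $m=-d$, using that $\chi(X,\bO_X(m))$ is the Hilbert polynomial for \emph{all} integers $m$ (which here is literally the lemma's convention $H_{\bO_X}(x)=\chi(X,\bO_X(x))$). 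This eliminates the appeals to Serre vanishing, Riemann--Roch, and B\'ezout, which buys a shorter and more conceptual argument; what the paper's longer route buys is the intermediate fact $\deg(\bO_C(1))=kd$, which it reuses in Proposition~\ref{pSoR}. One small caveat shared by both proofs: the identification $\mathcal{I}_C\cong\bO_X(-d)$ requires $s$ to be an $\bO_X$-regular section. You justify this by integrality of $X$, which the lemma as stated does not assume (it only says ``projective surface''), though it does hold in the theorem's applications; the paper simply asserts $\mathcal{I}=\bO_X(-d)$ without comment. This is a hypothesis worth flagging rather than a gap in your argument specifically.
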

\begin{proof}
    By Serre Vanishing Theorem, there exists an integer $t$ such that for all $m \geq t$, $H^{1}(X,\bO_X(m-d))=0$ and $H^{1}(C,\bO_C(m))=0$ (see \cite[III, Proposition 5.3]{Hart77}).
    Let $\mathcal{I}$ be the defining ideal sheaf of $C$.
    We have $\mathcal{I} = \bO_X(-d)$.
    Therefore, for every integer $m \geq t$, there is an exact sequence
    \[0 \to \Gamma(X,\bO_X(m-d))\to\Gamma(X,\bO_X(m))\to \Gamma(C,\bO_C(m)) \to 0.\]
    Then $\chi(C,\bO_C(m))=H(m)-H(m-d)$ when $m \geq t$.
    
    Thus, $\deg\bigl(\bO_C(m)\bigr)=\chi(C,\bO_C(m))-\chi(C,\bO_C)=-\frac{k}{2}d^2+mkd+\alpha d - (1-g_a)$ (see \cite[\href{https://stacks.math.columbia.edu/tag/0AYR}{Tag 0AYR}]{stacks-project}).
    By B\'{e}zout's Theorem and \cite[\href{https://stacks.math.columbia.edu/tag/0AYY}{Tag 0AYY}]{stacks-project}, $\deg\bigl(\bO_C(1)\bigr)=\deg(C)=kd$.
    Hence, $\deg\bigl(\bO_C(m)\bigr)=m\deg\bigl(\bO_C(1)\bigr)=mkd$.
    We conclude that $g_a=\frac{k}{2}d^2-\alpha d+1$.
\end{proof}

We formulate a singular version of a result in Milne~\cite[VII, Lemma 5.2]{Corn86}.
\begin{lem}\label{lSD}
    Let $C$ be an integral curve over a field $k$.
    \begin{enumerate}[1]
      \item If $D \in \Div(C_{\sm})$ such that $h^1(D) > 0$, then there exists a nonempty open subscheme $U$ of $C_{\sm}$ such that $h^1(D+Q)=h^1(D)-1$ for all $k$-rational points $Q$ in $U$.
      \item For every $r \leq g_a(C)$, there exists a nonempty open subscheme $U$ of $C^r_{\sm}$ such that $h^0(\sum_{i=1}^{r}P_i)=1$ for all $k$-rational points $(P_1,\dots,P_r)$ in $U$.
    \end{enumerate}
\end{lem}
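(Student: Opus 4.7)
The plan is to model the argument on the classical smooth-curve proof but replace the canonical line bundle by the dualizing sheaf $\omega_C$. Since $C$ is an integral projective curve, it is Cohen--Macaulay, so Serre duality yields $h^1(D) = \dim_k H^0(C, \omega_C \otimes \bO_C(-D))$. Moreover $\omega_C$ is torsion-free of rank one on $C$ and restricts to the invertible sheaf $\Omega^1_{C/k}$ on $C_{\sm}$, so for divisors supported on $C_{\sm}$ the sheaf $\omega_C \otimes \bO_C(-D)$ is locally free there even though $\omega_C$ may fail to be invertible at singular points.

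For part (1), the hypothesis $h^1(D) > 0$ gives a nonzero global section $s$ of $\omega_C \otimes \bO_C(-D)$. Since $D$ is supported on $C_{\sm}$, this sheaf is invertible on $C_{\sm}$, so the zero locus of $s|_{C_{\sm}}$ is a proper closed subset. Set $U$ to be its complement in $C_{\sm}$. For any $k$-rational point $Q \in U$, the short exact sequence
\[ 0 \to \omega_C \otimes \bO_C(-D-Q) \to \omega_C \otimes \bO_C(-D) \to (\omega_C \otimes \bO_C(-D))|_Q \to 0 \]
together with $s(Q) \neq 0$ forces the evaluation map $H^0(\omega_C \otimes \bO_C(-D)) \to k$ to be surjective. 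Hence $h^0(\omega_C \otimes \bO_C(-D-Q)) = h^0(\omega_C \otimes \bO_C(-D)) - 1$, which by Serre duality is precisely $h^1(D+Q) = h^1(D) - 1$.

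For part (2), I would induct on $r$, the base case $r = 0$ being $h^0(\bO_C) = 1$ (implicit in the setup, e.g.\ geometric integrality). For the inductive step, suppose a nonempty open $U_{r-1} \subseteq C^{r-1}_{\sm}$ witnesses the claim for $r-1 < g_a$. For any $(P_1, \dots, P_{r-1}) \in U_{r-1}$, the Euler characteristic combined with $h^0 = 1$ gives $h^1(\sum_{i<r} P_i) = g_a - r + 1 \geq 1$ since $r \leq g_a$, so part (1) produces an open $V \subseteq C_{\sm}$ (depending on the $P_i$'s) such that adding any $Q \in V(k)$ drops $h^1$ by one while keeping $h^0 = 1$. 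To compile this pointwise statement into a single open $U \subseteq C^r_{\sm}$, I would apply upper semi-continuity of cohomology to the invertible sheaf $\bO(\sum_{i=1}^r \Gamma_i)$ on $C^r_{\sm} \times C$, where $\Gamma_i$ is the pullback of the universal divisor along the $i$-th projection: the locus $\{h^0 \leq 1\} = \{h^0 = 1\}$ is open in $C^r_{\sm}$, and it is nonempty because the inductive construction exhibits points inside it.

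The main obstacle is the promotion step in part (2): one must upgrade a pointwise, stagewise existence statement (obtained by iterating part (1)) into a single open subscheme that is good for all $k$-rational points simultaneously. Doing this cleanly requires assembling the universal family on $C^r_{\sm} \times C$ and checking that the inductive construction genuinely produces scheme-theoretic points of the "good" locus (so that nonemptiness does not rely on the existence of $k$-rational points over possibly small base fields). A secondary technical point in part (1) is the identification of the evaluation map with the Serre duality pairing; this is harmless because $Q \in C_{\sm}$ guarantees that $\omega_C \otimes \bO_C(-D)$ is locally free in a neighborhood of $Q$, so the short exact sequence above is exact.
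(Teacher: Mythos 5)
Your proof of part (1) is essentially the paper's: both invoke Serre duality via the dualizing sheaf, pick a nonzero section $s$ of $\omega_C \otimes \bO_C(-D)$, take $U$ to be the complement of $\zZ(s)$ in $C_{\sm}$, and use the evaluation sequence at a $k$-rational point $Q \in U$ to drop $h^1$ by exactly one. Nothing to fix there.

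For part (2) your plan diverges from the paper's and, as you yourself flag, has a genuine gap: the induction iterates part (1), but part (1) only controls the effect of adding a $k$-\emph{rational} point, so the inductive construction only exhibits points of the good locus when the intermediate opens $U_{r-1}$ and $V$ actually have $k$-points, which can fail over a small base field. Upper semi-continuity on the family $\bO\bigl(\sum_i \Gamma_i\bigr)$ over $C^r_{\sm}$ does give openness of $\{h^0 = 1\}$ (flatness and properness hold since each $\Gamma_i$ is Cartier on $C^r_{\sm}\times C$ and $C$ is projective), but you still owe nonemptiness, and that is exactly what is not closed. The repair is to base change to $\bar{k}$, where rational points abound and $C_{\bar{k}}$ stays integral (in the paper's application $C$ is geometrically integral, and this really should be assumed, since otherwise $h^0(\bO_C)>1$ and even the $r=0$ case fails), run the iteration there to get a $\bar{k}$-point of the good locus, and descend using that cohomology commutes with flat base change. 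The paper avoids the issue entirely: it reduces (2) to demanding that for each $t<r$ the point $P_{t+1}$ is not a base point of $\Gamma\bigl(\bO\bigl(-\sum_{i\leq t}P_i\bigr)\otimes\omega_C\bigr)$, recognizes this as saying $\phi(P_1),\dots,\phi(P_r)$ are projectively independent under the canonical map $\phi\colon C_b\to\bP^{g_a-1}$, and takes $U=C_b^r\setminus(\phi^r)^{-1}(H)$ with $H$ the rank-deficiency determinantal locus; nonemptiness then follows from nondegeneracy of $\phi(C_b)$, with no appeal to rational points at intermediate stages. Your route becomes correct once the base-change step is supplied, but the determinantal construction is more explicit and shorter.
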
\begin{proof}
    (1) Since $C$ is integral of dimension one, $C$ is Cohen-Macaulay.
    Let $\omega_C$ be a dualizing sheaf on $C$ (see \cite[III, Proposition 7.5]{Hart77} or \cite[\href{https://stacks.math.columbia.edu/tag/0C10}{Tag 0C10}]{stacks-project}).
    By duality theorem, we have $h^1(E)=\dim_k\Gamma(C,\bO(-E) \otimes \omega_C)$ for every $E \in \Div(C_{\sm})$.
    
    Since $h^1(D) > 0$, $\Gamma(C,\bO(-D)\otimes \omega_C) \neq 0$.
    Let $0 \neq s \in \Gamma(C,\bO(-D)\otimes \omega_C)$, and let $U=C_{\sm}\setminus C_{s=0}$.
    Let $Q$ be a closed point of $U$.
    Then by the definition, there is an exact sequence \[ 0 \to \bO(-Q) \to \bO_C \to i_{Q,*}k_Q \to 0.\]
    Note that $\omega_C |_{C_{\sm}}=\omega_{C_{\sm}}$  is an invertible sheaf on $C_{\sm}$ (see the proof of \cite[III, Theorem 7.11]{Hart77} or, \cite[\href{https://stacks.math.columbia.edu/tag/0AU0}{Tag 0AU0}]{stacks-project} and \cite[\href{https://stacks.math.columbia.edu/tag/0FVV}{Tag 0FVV}]{stacks-project}).
    Taking the tensor product of $\bO(-D)\otimes \omega_C$ and the above exact sequence, we get \[ 0 \to \bO(-Q-D)\otimes \omega_C \to \bO(-D)\otimes \omega_C \to i_{Q,*}k_Q  \to 0.\]
    Thus we have an exact sequence of global sections \[ 0 \to \Gamma(\bO(-Q-D)\otimes \omega_C) \to \Gamma(\bO(-D)\otimes \omega_C) \to k_Q.\]
    The morphism $\Gamma(\bO(-D)\otimes \omega_C) \to k_Q$ is nontrivial because $s \neq 0$ at $Q$.
    When $Q$ is a $k$-rational point, the above morphism is surjective; hence, $h^1(D+Q)=h^1(D)-1$.
    
    (2) We may assume $r >1$.
    We need only find an open subscheme $U$ of $C_{\sm}^r$ such that $h^1(\sum_{i=1}^{r}P_i)=g_a-r$ for all $k$-rational points $(P_1,\dots,P_r)$ in $U$ by \cite[\href{https://stacks.math.columbia.edu/tag/0AYR}{Tag 0AYR}]{stacks-project} and \cite[\href{https://stacks.math.columbia.edu/tag/0AYY}{Tag 0AYY}]{stacks-project}.
    
    Denote by $V$ the vector space $\Gamma(C,\omega_C)$.
    Let $V_{P}$ denote the linear system $\Gamma\bigl(\bO\bigl(-\sum_{i=1}^{t}P_i\bigr)\otimes \omega_C\bigr)$, where $P=(P_1,\dots, P_t)$ is a $k$-rational point of $C_{\sm}^t$.
    Denote by $P_{(t')}$ the $k$-rational point $(P_1,\dots, P_{t'})$ of $C_{\sm}^{t'}$, where $P=(P_1,\dots, P_t)$ is a $k$-rational point of $C_{\sm}^t$ and $t' \leq t$.
    Following the argument in (1), we need only find a nonempty open subscheme $U$ of $C_{\sm}^r$ such that for each $P=(P_1,\dots,P_r) \in U(k)$ and each nonnegative integer $t<r$, $P_{t+1}$ is not a base point of $V_{P_{(t)}}$.
    
    Let $C_b$ be an nonempty open subscheme of $C_{\sm}$ such that $V$ generates $\omega_{C_b}$.
    Then $V$ induces a morphism $\phi: C_b \to \bP^{g_a-1}_k$ (see \cite[II, Theorem 7.1]{Hart77}).
    Let $H$ be the closed subscheme of $\bigl(\bP^{g_a-1}\bigr)^r$ defined by the $r\times r$ minors of the $r\times g_a$ matrix of indeterminates \[\begin{pmatrix}
    X_{1,0} & \cdots & X_{1,g_a-1} \\
    \vdots & \ddots & \vdots \\
    X_{r,0} & \cdots & X_{r,g_a-1} 
    \end{pmatrix}\]
    (rigorously, this is defined by the Segre embedding $\bigl(\bP^{g_a-1}\bigr)^r \hookrightarrow \bP^{g_a^r-1}$).
    Since $r \leq g_a$, $H$ is a proper subscheme of $\bigl(\bP^{g_a-1}\bigr)^r$.
    
    Denote by $U_0$ the open subscheme $\bigl(\bP^{g_a-1}\bigr)^r \setminus H$.
    As $\phi$ is induced by $V$ and $C$ is integral, $\phi(C_b)$ is not contained in any hyperplane of $\bP^{g_a-1}_k$.
    Thus, $U_0 \cap \phi^r(C_b^r)$ is nonempty.
    Therefore, $U=C_b^r \setminus (\phi^r)^{-1}(H)$ is the desired nonempty open subscheme.
\end{proof}

\begin{prop}\label{pSoR}
    Let $X \hookrightarrow \bP^n_{\R}$ be a projective surface with the Hilbert function $H_{\bO_X}(x)=\frac{k}{2}x^2+\alpha x+ \beta$.
    Let $C$ be a formally real integral curve defined by a section in $\Gamma(X,\bO_X(d))$, where $d\in \Z^+$.
    There exists an integer $N_X$ (depending only on $X$) such that for every integer $m \geq d+N_X$ there is a section $s \in \Gamma(X,\bO_X(m))$ whose restriction $s |_C$ has support contained in $C_{\sm}(\R)$.
    
    If $X$ is normal, the results holds for all integers \[m \in \left[\frac{d}{2}+N_X,d-N_X\right] \cup \left[d+N_X, +\infty\right).\]
\end{prop}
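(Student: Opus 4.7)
The plan is to exploit the short exact sequence
\[0 \to \bO_X(m-d) \to \bO_X(m) \to \bO_C(m) \to 0\]
(where $\bO_X(-d)$ is the ideal sheaf of $C$) to reduce to constructing a section $t \in \Gamma(C, \bO_C(m))$ whose zero divisor is supported in $C_{\sm}(\R)$, and then to build such a $t$ by combining Lemma~\ref{lSD} with the Zariski density of $C_{\sm}(\R)$ in $C_{\sm}$, which holds by Proposition~\ref{prop1_2} since $C$ is formally real integral.

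\textbf{Choice of $N_X$ and surjectivity of restriction.} First I would choose $N_X$ large enough that $H^1(X, \bO_X(n)) = 0$ for $n \geq N_X$ (Serre vanishing for the ample $\bO_X(1)$), and in the normal case additionally that $H^1(X, \omega_X \otimes \bO_X(n)) = 0$ for $n \geq N_X$ (normality of the surface $X$ gives Cohen--Macaulay, hence a coherent dualizing sheaf $\omega_X$). For $m \geq d + N_X$ the first vanishing applied to $n = m-d$ kills $H^1(X, \bO_X(m-d))$ and so the restriction $\Gamma(X, \bO_X(m)) \twoheadrightarrow \Gamma(C, \bO_C(m))$ is surjective. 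For normal $X$ and $m \leq d - N_X$, Serre duality on the surface identifies $H^1(X, \bO_X(m-d))^{\vee}$ with $H^1(X, \omega_X \otimes \bO_X(d-m))$, which again vanishes by the choice of $N_X$, giving the same surjectivity.

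\textbf{Construction of $t$.} By Lemma~\ref{lBoG} the arithmetic genus is $g_a = \tfrac{k}{2}d^2 - \alpha d + 1$, and $\deg \bO_C(m) = mkd$. After enlarging $N_X$, the bound $m \geq d/2 + N_X$ (resp.\ $m \geq d + N_X$) forces $mkd \geq 2g_a - 1$, so Riemann--Roch on the integral curve $C$ gives $h^1(\bO_C(m)) = 0$ and $h^0(\bO_C(m)) = mkd - g_a + 1$. Set $r = mkd - g_a$. By Lemma~\ref{lSD}(ii) there is a nonempty open $U \subseteq C_{\sm}^{g_a}$ on which $h^0\bigl(\sum Q_j\bigr) = 1$; Zariski density of $C_{\sm}(\R)$ in $C_{\sm}$ (Proposition~\ref{prop1_2}) propagates to products, so $U \cap C_{\sm}(\R)^{g_a} \neq \emptyset$, and we take $D_Q = \sum_{j=1}^{g_a} Q_j$ in that intersection. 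I would then pick real smooth points $P_1, \dots, P_r \in C_{\sm}(\R)$ in general position realizing $\sum P_i \sim \bO_C(m) \otimes \bO_C(-D_Q)$ in $\Pic^r(C)(\R)$, and let $t$ be the section of $\bO_C(m)$ with zero divisor $\sum P_i + D_Q$; by construction $\supp \zZ(t) \subseteq C_{\sm}(\R)$. Lifting $t$ via the surjectivity of the previous step produces the required $s \in \Gamma(X, \bO_X(m))$.

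\textbf{Main obstacle.} The delicate step is producing the points $P_1, \dots, P_r$ realizing the target class in $\Pic^r(C)(\R)$: one must know that the image of the real Abel--Jacobi map $C_{\sm}(\R)^r \to \Pic^r(C)(\R)$ contains $[\bO_C(m) \otimes \bO_C(-D_Q)]$. This is where the freedom to also vary $D_Q$ within $U \cap C_{\sm}(\R)^{g_a}$ matters: the combined parameter space has real dimension $r + g_a = mkd$, comfortably covering the $g_a$-dimensional target once Lemma~\ref{lSD}(ii) ensures the relevant fibers are finite. Making this dimension count rigorous---using Lemma~\ref{lSD}, Proposition~\ref{prop1_2}, and the formally real hypothesis on $C$---is the technical heart of the argument.
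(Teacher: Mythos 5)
Your outline matches the paper's strategy up to the point you yourself flag as the ``technical heart,'' but the resolution you sketch there does not work, and this is exactly where the paper's proof does something genuinely different.

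The gap is the claimed surjectivity of the real Abel--Jacobi map. You argue that since the parameter space of tuples $(P_1,\dots,P_r;Q_1,\dots,Q_{g_a}) \in C_{\sm}(\R)^{r+g_a}$ has real dimension $mkd \geq g_a$ and Lemma~\ref{lSD}(2) makes the generic fiber finite, the image must contain the target class in $\Pic(C)(\R)$. That inference is false in real algebraic geometry: a dominant morphism with finite generic fibers need not be surjective on real points, and its image can be a proper semialgebraic subset that misses the point you want (already $\R \to \R$, $x \mapsto x^2$, is a counterexample to the shape of reasoning being used). Nothing in a bare dimension count forces the image to contain a prescribed element of $\Jac_{C/\R}(\R)$, so as written there is no proof that the effective divisor with real support exists.

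What the paper does instead is to extract surjectivity from two structural facts about the Abel--Jacobi morphism $\phi_{D_m}\colon C_{\sm}^{(g_a)} \to \Jac_{C/\R}$, $(P_1,\dots,P_{g_a}) \mapsto [\sum P_i - D_m]$: it is \emph{proper} (a nonobvious property of the symmetric power of the open curve $C_{\sm}$, cited from \cite[Exercise 9.4.12]{Fant05}), and it is \emph{birational} onto $\Jac_{C/\R}$ (this is where Lemma~\ref{lSD}(2) enters). Birationality makes the image of $C_{\sm}^{(g_a)}(\R)$ Euclidean-dense in $\Jac_{C/\R}(\R)$ (because $\Jac_{C/\R}$ is a smooth $g_a$-dimensional group and the non-isomorphism locus is a proper closed subvariety, hence nowhere dense on real points by Lemma~\ref{lZtL}); properness makes that image Euclidean-closed in $\Jac_{C/\R}(\C)$, hence in $\Jac_{C/\R}(\R)$. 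Dense and closed together give surjectivity onto $\Jac_{C/\R}(\R)$, in particular the identity is hit, which is what produces the desired effective real divisor $\sim mD$. The properness input is essential and cannot be replaced by a dimension count; you should incorporate it (or an equivalent compactness-type argument) to close the gap. Your surjectivity-of-restriction step and the use of Lemma~\ref{lBoG} to get the numerical bounds are fine, and your normal-case vanishing via Serre duality on $X$ is an acceptable variant of the paper's Enriques--Severi--Zariski argument.
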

\begin{proof}
    By Serre Vanishing Theorem, there exists an integer $n_X$ such that for all $m \geq n_X$, $H^{1}(X,\bO_X(m))=0$.
    Let $\mathcal{I}$ be the defining ideal sheaf of $C$.
    We have $\mathcal{I} = \bO_X(-d)$.
    Therefore, for every integer $m \geq d+n_X$, the map $\Gamma(X,\bO_X(m)) \to \Gamma(C,\bO_C(m))$ is surjective.
    
    Since $C$ is a proper flat and geometrically integral scheme over $\R$, and $C$ has dense real points, the relative picard functor of $C/\R$ is represented by a locally quasi-projective scheme $\Pics_{C/\R}$ (see \cite[9.2]{Fant05} and \cite[Corollary 9.4.18.3]{Fant05}).
    Let $\Jac_{C/\R}$ be the connected component of $\Pics_{C/\R}$ containing the identity element, then $\Jac_{C/\R}$ is a smooth quasi-projective algebraic group of dimension $g_a(C)$ (see \cite[Theorem 9.5.11]{Fant05}).
    
    Let $C_{\sm}^{(b)}$ denote the $b^{th}$ symmetric power of $C_{\sm}$ (see \cite[VII, \S3]{Corn86} or \cite[Remark 9.3.9]{Fant05}).
    If $E \in \Div(C_{\sm})$ is a divisor of degree $b >0$, then we can define a morphism $\phi_E : C_{\sm}^{(b)} \to \Jac_{C/\R}$ functorially by \[(P_1,\dots, P_b) \mapsto [P_1+\cdots+P_b - E]\] (see \cite[VII, \S5]{Corn86} or \cite[Definition 9.4.6]{Fant05}).
    Note that the morphism $\phi_E$ is proper (see \cite[Exercise 9.4.12]{Fant05} ).
    
    Denote the degree of $C$ by $\delta$.
    By B\'{e}zout's theorem, $\delta= \deg\bigl(\bO_C(1)\bigr)=kd$.
    We can choose a hyperplane $H \in \Gamma(\bP^n_{\R},\bO_{\bP^n}(1))$ such that $H \cap (C\setminus C_{\sm})=\emptyset$.
    Hence, there exists an effective divisor $D \in \Div(C_{\sm})$ representing $\mathcal{O}_C(1)$.
    Since $C$ is formally real, there exists a point $P \in C_{\sm}(\R)$.
    Let $D_m= mD-(m \delta -g_a)P$.
    Then the map $\phi_{D_m} : C_{\sm}^{(g_a)} \to \Jac_{C/\R}$ is injective on an open subscheme of $C_{\sm}^{(g_a)}$ by Lemma~\ref{lSD}(2) (see a similar argument in smooth case in \cite[VII, Lemma 5.2]{Corn86}).
    The schemes $C_{\sm}^{(g_a)}$ and $\Jac_{C/\R}$ share the same dimension.
    Thus, $\phi_{D_m}$ is a birational proper morphism for every $m\geq 0$.
    
    Since $\Jac_{C/\R}$ is smooth, every nonempty Euclidean open subset of $\Jac_{C/\R}(\R)$ has the covering dimension $g_a$ by implicit function theorem.
    Hence, every proper Zariski closed subset of $\Jac_{C/\R}(\R)$ is nowhere dense, as it has a lower covering dimension by Lemma~\ref{lZtL}.
    Since $\phi_{D_m}$ is a birational morphism for every $m \geq 0$, it is an isomorphism over a dense open subscheme of $\Jac_{C/\R}$.
    Therefore, $\phi_{D_m}\bigl(C_{\sm}^{(g_a)}(\R)\bigr)$ is dense in $\Jac_{C/\R}(\R)$.
   
    As $\phi_{D_m}$ is a proper morphism of algebraic schemes, it maps closed subsets of $C_{\sm}^{(g_a)}(\C)$ to closed subsets of $\Jac_{C/\R}(\C)$ (see \cite[APPENDIX B.1]{Hart77} or \cite{Serr56}).
    Consequently, $\phi_{D_m}$ surjects $C_{\sm}^{(g_a)}(\R)$ onto $\Jac_{C/\R}(\R)$ for every $m$, which includes the identity element of $\Jac_{C/\R}$.
    Since $C$ has a real point, the map $\Pic(C) \to \Pics_{C/\R}(\R)$ is an isomorphism (see \cite[9.2]{Fant05}).
    Thus, for every positive integer $m \geq \frac{g_a}{\delta}$, there exists an effective divisor supported on $C_{\sm}(\R)$ linearly equivalent to $mD$.
    By the definition of the linear equivalence of the Cartier divisors, there exists a section in $\Gamma(C,\bO_C(m))$ supported on $C_{\sm}(\R)$ when $m \geq \frac{g_a}{\delta}$.

    By Lemma \ref{lBoG}, the inequality $m \geq \frac{d}{2}+1-\min\{0,\alpha\}$ guarantees the existence of a section in $\Gamma(C,\bO_C(m))$ supported on $C_{\sm}(\R)$.
    Define $N_X:=\left\lceil\max \{n_X, 1-\alpha\}\right\rceil$.
    For $m\geq d+N_X$, we conclude that there exists a section $s \in \Gamma(X,\bO_X(m))$ such that $s |_C$ is supported on $C_{\sm}(\R)$.
    
    If $X$ is normal, by Enriques-Severi-Zariski Lemma, there exists an integer $n_X'$ such that for all $m \geq n_X'$, $H^{1}(X,\bO_X(-m))=0$.
    Consequently, the restriction map $\Gamma(X,\bO_X(m)) \to \Gamma(C,\bO_C(m))$ is surjective for every integer $m \leq d-n_X'$.
    Redefine $N_X:=\left\lceil\max \{n_X,n_X', 1-\alpha\}\right\rceil$.
    We conclude that for all integers \[m \in \left[\frac{d}{2}+N_X,d-N_X\right] \cup \left[d+N_X, +\infty\right)\] there exists a section $s \in \Gamma(X,\bO_X(m))$ with $s |_C$ supported on $C_{\sm}(\R)$.
\end{proof}

\section{Main results}

\begin{thm}\label{tRtR}
Let $k$ be a field with $\chara(k)\neq 2$, and let $A$ be a finitely generated algebra over $k$. Let $W \subseteq \Spec(A)$ be the Zariski closure of formally real points of $\Spec(A)$. Then there exist $x_1, \dots , x_n \in A$, such that $\sum_{i=1}^{n}x_i^2=0$ and $W= \Spec\bigl(A/(x_1,\dots , x_n)\bigr)^{\red}$.
\end{thm}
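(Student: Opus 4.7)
The plan is to identify the ideal $I$ cutting out $W$ with the real radical of $(0)$ in $A$, and then read off the required elements directly from the definition of the real radical.

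A prime $p\subseteq A$ is a formally real point precisely when $\kappa(p)$ is formally real, so the Zariski closure of the set of formally real points is $W=V(I)$ with
\[
I\;=\;\bigcap_{p\text{ formally real}} p.
\]
By the Krivine--Dubois--Risler real Nullstellensatz, this intersection of real primes coincides with the real radical
\[
\sqrt[\R]{(0)}\;=\;\bigl\{\,a\in A : a^{2m}+\textstyle\sum_{i} s_i^2=0\text{ for some } m\geq 0,\ s_i\in A\,\bigr\}.
\]
In particular $I$ is already radical, so $W=\Spec(A/I)$.

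Since $A$ is Noetherian, I would choose generators $a_1,\dots,a_t$ of $I$. The characterisation of $\sqrt[\R]{(0)}$ then supplies, for each $j$, an exponent $k_j\geq 0$ and elements $x_{j,1},\dots,x_{j,n_j}\in A$ with $a_j^{2k_j}+\sum_i x_{j,i}^2=0$. Setting $y_j:=a_j^{k_j}$ and summing over $j$ produces the single sum-of-squares relation
\[
\sum_j y_j^2 \;+\; \sum_{j,i} x_{j,i}^2 \;=\; 0
\]
required by the statement. It remains to check that the ideal $J:=(y_j,x_{j,i})$ satisfies $\sqrt J=I$. The inclusion $I\subseteq\sqrt J$ is immediate from $y_j=a_j^{k_j}$, since the $a_j$ generate $I$. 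Conversely, for any formally real prime $p$ (necessarily $I\subseteq p$), the relation $y_j^2+\sum_i x_{j,i}^2=0$ viewed in the formally real field $\kappa(p)$ forces every summand to vanish, so each $x_{j,i}\in p$; intersecting over all such $p$ gives $x_{j,i}\in I$, whence $J\subseteq I$ and $\sqrt J=I$. This yields $W=\Spec(A/J)^{\red}$.

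The only nontrivial input is the real Nullstellensatz; the rest is bookkeeping with Noetherianness and the rigidity of sums of squares in a formally real field. The potential obstacle is merely stylistic: if a self-contained argument is preferred, the inclusion $I\subseteq\sqrt[\R]{(0)}$ can instead be established directly by a Zorn's lemma extension of $(0)$ to a real prime avoiding the multiplicative set $\{a^{2m}(1+s):m\geq 0,\,s\in S(A)\}$ whenever $a\notin\sqrt[\R]{(0)}$, which is essentially the original proof of Krivine--Dubois--Risler.
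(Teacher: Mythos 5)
Your argument is correct, but it is genuinely different from the paper's. You invoke the abstract real Nullstellensatz to identify the vanishing ideal $I$ of $W$ with the real radical $\sqrt[\R]{(0)}$, pull generators of $I$ by Noetherianness, and read off a sum-of-squares relation for each generator from the defining property of the real radical; the final verification that $\sqrt{J}=I$ is clean and correct (the inclusion $x_{j,i}\in I$ follows, as you say, because a vanishing sum of squares in a formally real residue field forces each summand to vanish). The paper instead avoids the real Nullstellensatz entirely and argues by Noetherian induction on closed subschemes of $\Spec(A)$: if $W\subsetneq\Spec(A)^{\red}$, then $A^{\red}$ is not formally real (this uses only the elementary implications of Proposition~\ref{prop1_2}, namely that a formally real reduced ring has formally real generic points), so one extracts a nontrivial relation $\sum\bar a_i^2=0$ in $A^{\red}$, exponentiates and expands to get $\sum y_i^2=0$ in $A$ with the $a_i^N$ among the $y_i$, and then applies the induction hypothesis to $A/(y_1^2,\dots,y_m^2)$, splicing the two sum-of-squares relations together via the identity $c_iy_i^2=\bigl(\tfrac{c_i+1}{2}y_i\bigr)^2+\sum_{j\neq i}\bigl(\tfrac{c_i-1}{2}y_j\bigr)^2$ (valid modulo $\sum y_j^2=0$). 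In effect you outsource the inductive Zorn/prime-avoidance work to the proof of the real Nullstellensatz, which you correctly note is the content of the Krivine--Dubois--Risler argument, whereas the paper re-runs a version of that induction internally; your route is shorter and more conceptual, the paper's is self-contained and elementary, but both ultimately rest on the same mechanism.
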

\begin{proof} We use Noetherian induction to prove the theorem.
	
Let $\Spec(A)$ be denoted by $X$, and $\Spec\bigl(A/(t_1,\dots , t_m)\bigr)$ by $V(t_1,\cdots, t_m)$.  Assume the proposition holds for all proper subschemes of $X$.
We may assume $W \subsetneqq X^{\red}$; otherwise, there is nothing to prove.
	
First, we prove that there exist $y_1,\dots ,y_m \in A$ such that $\sum_{i=1}^{m}y_i^2=0$ and $V(y_1,\dots , y_m)^{\red}\subsetneqq X^{\red}$.
By Proposition~\ref{prop1_2}(1) and (3) and our assumption $W \subsetneqq X^{\red}$, there exist $\bar{a}_1,\dots , \bar{a}_s \in A^{\red} \setminus \{0\} $ such that $\sum_{i=1}^{s}\bar{a}_i^2=0$ in $A^{\red}$ .
Lift $\bar{a}_i$ to $a_i$ in $A$.
Then there exists an integer $N$ such that $\Bigl(\sum_{i=1}^{s}a_i^2\Bigr)^N=0$.

Expanding the expression, we obtain $\sum_{i=1}^{m}y_i^2=0$ in $A$.
Note that the set $\{y_i \mid i=1, \dots, m\}$ contains $\{a_i^N \mid i=1, \dots, s\}$.
Since $a_i$ is nonzero in $A^{\red}$ for each $i$, $V(a_i)^{\red}$ defines a proper subscheme of $X^{\red}$.
Thus, \[V(y_1,\dots , y_m)^{\red} \subseteq V(a_1,\dots , a_s)^{\red} \subsetneqq X^{\red}.\]
Since $\sum_{i=1}^{m}y_i^2=0$ must be trivial at every formally real point, each $y_i$ vanishes on all formally real points. Hence, $W \subseteq V(y_1,\dots , y_m)$.

Let $I=(y_1^2,\dots , y_m^2)$, and let $B=A/I$.
By Noetherian induction, there exist $b_1, \dots , b_s \in A$ such that \[\sum_{i=1}^{s}b_i^2 \in I\quad \text{and} \quad \bigl(V(b_1,\dots , b_s) \cap \Spec(B)\bigr)^{\red}=W.\] 
Then, there exist $c_1, \dots , c_m$ such that $\sum_{i=1}^{s} b_i^2+\sum_{i=1}^{m}c_iy_i^2=0$.
Using the identity
 \[ c_iy_i^2=\left(\frac{c_i+1}{2}\right)^2y_i^2- \left(\frac{c_i-1}{2}\right)^2y_i^2 =\left(\frac{c_i+1}{2}y_i\right)^2+\sum_{j \neq i}\left(\frac{c_i-1}{2}y_j\right)^2,\]
we see that $\sum_{i=1}^{s}b_i^2+\sum_{i=1}^{m}c_iy_i^2$ is a sum of squares $\sum_{i=1}^{n}z_i^2$. Given $\sum_{i=1}^{m}y_i^2 + \sum_{i=1}^{n}z_i^2=0$, we have	
 \[ W \subseteq V(y_1,\dots , y_m,z_1, \dots , z_n)^{\red} \subseteq \bigl(V(b_1,\dots , b_s) \cap \Spec(B)\bigr)^{\red} = W. \]
Therefore, the theorem is proved.
\end{proof}

\begin{lem} \label{lStE}
    Let $A$ be a commutative ring such that $2$ is a unit in $A$.
    Let $a_1, \dots , a_n \in A$.
    Then \[P\bigl(A/(a_1^2+ \dots + a_n^2)\bigr) < \infty \iff P\bigl(A/(a_1^2, \dots , a_n^2)\bigr) < \infty.\]
\end{lem}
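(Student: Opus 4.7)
Set $I_1=(a_1^2+\cdots+a_n^2)$ and $I_2=(a_1^2,\ldots,a_n^2)$. Since $I_1\subseteq I_2$, there is a natural surjection $\pi\colon A/I_1\twoheadrightarrow A/I_2$, and the two implications of the lemma will exploit this surjection in opposite directions.

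The forward implication $P(A/I_1)<\infty\Rightarrow P(A/I_2)<\infty$ is the easy half and rests on the general principle that a ring surjection cannot increase the Pythagoras number. Given $y\in S(A/I_2)$ written as $y=\sum_{i=1}^\ell y_i^2$, I would lift each $y_i$ to some $\tilde y_i\in A/I_1$; then $\sum\tilde y_i^2\in S(A/I_1)$ is a preimage of $y$, so $l_{A/I_2}(y)\leq l_{A/I_1}\bigl(\sum\tilde y_i^2\bigr)\leq P(A/I_1)$, and taking supremum gives $P(A/I_2)\leq P(A/I_1)$.

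The reverse implication is the substantive direction. The plan is to lift a bounded-length representation from $A/I_2$ back to $A/I_1$ and absorb the resulting error into squares by exploiting the relation $\sum a_i^2=0$ which now holds in $A/I_1$. Concretely, since $2$ is a unit, we have the identity
\[f a_k^2=\left(\tfrac{f+1}{2}\right)^2 a_k^2-\left(\tfrac{f-1}{2}\right)^2 a_k^2,\]
and in $A/I_1$ the relation $-a_k^2=\sum_{j\neq k}a_j^2$ converts the negative square into a sum of $n-1$ squares, yielding
\[f a_k^2=\left(\tfrac{f+1}{2}a_k\right)^2+\sum_{j\neq k}\left(\tfrac{f-1}{2}a_j\right)^2 \quad\text{in } A/I_1,\]
a sum of exactly $n$ squares. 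This is precisely the manipulation already used at the end of the proof of Theorem~\ref{tRtR}. Assuming $P(A/I_2)=N<\infty$, for any $\bar y\in S(A/I_1)$ I write $\pi(\bar y)=\sum_{j=1}^N\bar d_j^2$ in $A/I_2$ and lift; then $\bar y-\sum_j\bar d_j^2\in\ker\pi=I_2/I_1$ has the form $\sum_{k=1}^n\bar f_k\bar a_k^2$, and applying the identity above to each summand rewrites $\bar y$ as a sum of at most $N+n^2$ squares in $A/I_1$, giving $P(A/I_1)\leq P(A/I_2)+n^2<\infty$.

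The only real obstacle is this reverse direction, and it is overcome by the identity just described; everything else is routine bookkeeping about surjections and lifts.
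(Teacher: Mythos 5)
Your proof is correct. The paper does not give an argument of its own here, referring instead to Choi--Dai--Lam--Reznick, Corollary~2.6, so there is nothing in the text to compare against; but your two-step plan is the natural self-contained proof. The easy direction uses the general fact that a surjection $R\twoheadrightarrow S$ gives $P(S)\le P(R)$ (lift a representation of $y\in S(S)$ to get a preimage in $S(R)$, shorten it there, and push forward), and the substantive direction uses the identity
\[
 f a_k^2=\left(\tfrac{f+1}{2}a_k\right)^2+\sum_{j\neq k}\left(\tfrac{f-1}{2}a_j\right)^2\quad\text{in }A/(a_1^2+\cdots+a_n^2),
\]
to absorb elements of $I_2/I_1$ into at most $n^2$ squares. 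This is exactly the manipulation the paper itself deploys at the end of the proof of Theorem~\ref{tRtR}, so your route almost certainly coincides with the cited one. Two small points worth tightening: in the reverse direction, state explicitly that $\pi(\bar y)\in S(A/I_2)$ (because $\pi$ sends sums of squares to sums of squares) before invoking $P(A/I_2)=N$; and the representation of $\pi(\bar y)$ has at most $N$ summands, not exactly $N$. With those touches your argument actually yields the clean quantitative statement $P(A/I_2)\le P(A/I_1)\le P(A/I_2)+n^2$, which is slightly stronger than the equivalence of finiteness.
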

\begin{proof}
    See \cite[Corrollary 2.6]{Choi82}.
\end{proof}

\begin{cor}\label{cRtR}
Let $k$ be a formally real field with $P(k) < \infty$, and let $A$ be a finitely generated algebra over $k$.
Let $x_1,\cdots , x_n \in A$ be the elements chosen in Theorem~\ref{tRtR}. 
Then the Pythagoras number $P(A)$ is finite if and only if $P\bigl(A/(x_1^2,\dots , x_n^2)\bigr)$ is finite.
\end{cor}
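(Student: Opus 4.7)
The plan is to derive this corollary as an immediate consequence of Lemma~\ref{lStE}. The key observation is that the elements $x_1,\ldots,x_n$ produced by Theorem~\ref{tRtR} satisfy the identity $\sum_{i=1}^{n} x_i^2 = 0$ in $A$ itself (not merely modulo some auxiliary ideal), so the principal ideal $(x_1^2+\cdots+x_n^2)$ is already $(0)$, and hence $A/(x_1^2+\cdots+x_n^2)=A$. Applying Lemma~\ref{lStE} with $a_i=x_i$ therefore collapses its biconditional to exactly the equivalence $P(A)<\infty \iff P\bigl(A/(x_1^2,\ldots,x_n^2)\bigr)<\infty$ that we seek.

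To legitimately invoke Lemma~\ref{lStE} one only needs $2$ to be a unit in $A$, and this is automatic: $k$ is formally real, hence of characteristic zero, so $2$ is invertible in $k \subseteq A$. The hypothesis $P(k)<\infty$ is not consumed by the proof of the corollary itself; it is presumably carried along because the intended downstream applications will quantify $P(A)$ in terms of both $P(k)$ and a bound on $P\bigl(A/(x_1^2,\ldots,x_n^2)\bigr)$.

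Since the argument is essentially a single pointer to the cited lemma, there is no real obstacle. The substance lies in \cite[Corollary~2.6]{Choi82}, whose proof uses identities of the type $cy^2 = \bigl(\tfrac{c+1}{2}y\bigr)^2 - \bigl(\tfrac{c-1}{2}y\bigr)^2$ to rewrite error terms lying in $(x_1^2,\ldots,x_n^2)$ as sums of squares; the relation $\sum_{i=1}^n x_i^2=0$ is precisely what keeps the number of such squares bounded when lifting a sum-of-squares representation from the quotient $A/(x_1^2,\ldots,x_n^2)$ back up to $A$. The opposite implication is trivial, since any sum-of-squares representation in $A$ descends to one in the quotient of the same length.
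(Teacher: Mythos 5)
Your proof is correct and follows the same route as the paper: both reduce the statement to Lemma~\ref{lStE} by observing that $\sum_{i=1}^n x_i^2 = 0$ forces $A/(x_1^2+\cdots+x_n^2)=A$, with $2$ a unit because $k$ is formally real and hence of characteristic zero. Your side-remarks are also accurate --- the hypothesis $P(k)<\infty$ is indeed not consumed here, and your phrasing is in fact a touch more careful than the paper's, which asserts the equality $P(A)=P\bigl(A/(x_1^2,\dots,x_n^2)\bigr)$ even though Lemma~\ref{lStE} as stated only delivers the biconditional on finiteness.
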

\begin{proof}
    We have $\sum_{i=1}^{n}x_i^2=0$ in Theorem~\ref{tRtR}.
    Therefore, by Lemma~\ref{lStE}, $P(A)=P\bigl(A/(x_1^2,\dots , x_n^2)\bigr)$.
\end{proof}

\begin{prop}\label{plD}
Let $k$ be a formally real field with $P(k) < \infty$, and let $A$ be a finitely generated algebra over $k$.
\begin{itemize}
  \item If $\dim(A) = 0$, then $P(A) < \infty$.
  \item If $\dim(A) \leq 1$ and $k$ is real closed, then $P(A) < \infty$.
\end{itemize}
\end{prop}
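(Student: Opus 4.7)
The plan for both parts is to apply Corollary~\ref{cRtR} and replace $A$ by $B := A/(x_1^2,\dots,x_n^2)$, where the $x_i$'s are those produced by Theorem~\ref{tRtR}. Since $(x_1^2,\dots,x_n^2)$ has the same radical as $(x_1,\dots,x_n)$, the reduced quotient $B^{\red}$ is the coordinate ring of the Zariski closure $W$ of the formally real points of $\Spec A$, and is therefore formally real by Proposition~\ref{prop1_2}. Moreover $\dim B^{\red}\le\dim A$ and the kernel of $B\twoheadrightarrow B^{\red}$ is nilpotent, since each image of $x_i$ squares to zero in $B$. The task thus splits into (a) bounding $P(B^{\red})$ and (b) propagating finiteness through the nilpotent surjection $B\twoheadrightarrow B^{\red}$.

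For bullet (1), $B$ is Artinian, hence a finite product of local Artinian rings; since $P$ of a finite product equals the maximum over the factor Pythagoras numbers, one reduces to $B$ local. If the residue field $k'$ is not formally real then $W=\emptyset$ and hence $B=0$, so $P(B)=0$; otherwise $B^{\red}=k'$ is a finite formally real extension of $k$, and Pfister's classical bound for the Pythagoras number of a finite field extension of a field with finite Pythagoras number (see e.g.\ \cite[Chapter~VIII]{Lam05}) yields $P(k')<\infty$.

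For bullet (2), after the same reduction $B^{\red}$ is a finite product of formally real integral affine $k$-algebras of dimension at most one over the real closed field $k$; the $0$-dimensional factors are absorbed into bullet (1). For a $1$-dimensional factor, passing to the normalization $\widetilde{B^{\red}}$ yields a smooth affine curve over $k$, whose Pythagoras number is finite by classical results on sums of squares on real curves (cf.\ \cite[Theorem~3.1]{Choi82}, together with the fact that Pfister gives $P\le 2$ on the function field and strong approximation on Dedekind domains converts this to a bound on the ring). The conductor between $B^{\red}$ and $\widetilde{B^{\red}}$ is supported on a $0$-dimensional formally real closed subscheme, whose contribution is again handled by bullet (1), and assembling these pieces yields $P(B^{\red})<\infty$.

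The common lifting step (b) proceeds by induction on the nilpotence index of $\ker(B\twoheadrightarrow B^{\red})$, reducing to the case of a square-zero ideal $I\subseteq B$ with $P(B/I)<\infty$. For $x\in S(B)$, one lifts a shortest representation $\bar x=\sum_{i=1}^{p}\bar y_i^{2}$ from $B/I$ to $B$ and obtains the error $\epsilon=x-\sum y_i^{2}\in I$. When $x$ is a unit of $B$, the formal reality of $B^{\red}$ forces some $\bar y_i$ to be a unit in $B/I$, hence $y_i$ is a unit in $B$, and the correction $y_i\mapsto y_i+\epsilon/(2y_i)$ absorbs $\epsilon$ using $I^{2}=0$, giving $l_B(x)\le p$. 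The main obstacle will be the non-unit case $x\in\mathfrak{m}_B$: formal reality of $B^{\red}$ forces every $y_i$ into $\mathfrak{m}_B$, so $x\in\mathfrak{m}_B^{2}$, and one must run a graded argument along the $\mathfrak{m}_B$-adic filtration, using diagonalization of quadratic forms over the formally real residue field $k'$ at each graded piece $\mathfrak{m}_B^{j}/\mathfrak{m}_B^{j+1}$ to bound the length at that level, and then assembling these over the finitely many nonzero graded pieces of the $\mathfrak{m}_B$-adic filtration to produce a uniform bound for $l_B(x)$.
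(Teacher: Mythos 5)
The paper proves this proposition by a single citation to \cite[Theorem 2.7]{Choi82}; your proposal instead attempts a direct proof, which is a genuinely different route, but it has real gaps that prevent it from standing on its own.

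The central gap is the ``common lifting step (b).'' For the non-unit case you write ``one must run a graded argument along the $\mathfrak{m}_B$-adic filtration\dots\ to produce a uniform bound for $l_B(x)$,'' but you never actually run that argument. This is not a small omission: it is precisely the hard technical content of the zero-dimensional case (a local Artinian ring can have nilpotents of arbitrary index, and the passage from sums of squares in the associated graded ring back to sums of squares in $B$, with a length bound independent of $x$, requires a genuine diagonalization-and-correction argument, not just the remark that the filtration is finite). As a smaller point in the same step, from $y_i\in\mathfrak{m}_B$ and $\epsilon\in I$ you only get $x\in\mathfrak{m}_B^2+I$, not $x\in\mathfrak{m}_B^2$; for this to become $x\in\mathfrak{m}_B^2$ you would need $I\subseteq\mathfrak{m}_B^2$, which is false in general for the nilradical (e.g.\ $B=k[t]/(t^2)$). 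Moreover, the whole unit/non-unit dichotomy and the $\mathfrak{m}_B$-adic filtration presuppose that $B$ is local Artinian, so step (b) as written does not apply to the one-dimensional ring $B$ of bullet (2), despite your framing it as ``the common lifting step'' for both parts.

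Bullet (2) has further gaps. You assert that the Pythagoras number of a smooth affine curve over a real closed field is finite via ``Pfister gives $P\le 2$ on the function field and strong approximation on Dedekind domains converts this to a bound on the ring,'' but converting a rational representation $a=f^2+g^2$ in $\Frac(\widetilde{B^{\red}})$ into an integral one with a uniform length bound is nontrivial and is not a consequence of strong approximation alone. You then claim the conductor locus is ``formally real'' (it need not be) and that its contribution ``is handled by bullet (1)''; but no conductor-square argument is given showing how to assemble a finite Pythagoras number for the singular curve from that of its normalization and a zero-dimensional closed subscheme. All of these are precisely the points that \cite[Theorem~2.7]{Choi82} is invoked to supply; your proposal reconstructs the scaffolding but leaves the load-bearing pieces as assertions.
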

\begin{proof}
    See \cite[Theorem 2.7]{Choi82}.
\end{proof}
We can extend the result of \cite[Theorem 2.7]{Choi82} using Theorem~\ref{tRtR}.
\begin{cor}\label{cDl0}
Let $k$ be a formally real field with $P(k) < \infty$, and let $A$ be a finitely generated algebra over $k$.
If $\Spec(A)$ has only finitely many formally real points, then $P(A)$ is finite.
\end{cor}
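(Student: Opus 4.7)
My plan is to chain together the machinery already developed in the previous three results (Theorem~\ref{tRtR}, Corollary~\ref{cRtR}, and the first bullet of Proposition~\ref{plD}). The hypothesis that $\Spec(A)$ has finitely many formally real points means the Zariski closure $W$ of those points is a finite set, so it has dimension zero; the task is to convert this dimensional information into a finiteness statement about $P(A)$.

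First I would invoke Theorem~\ref{tRtR} to produce elements $x_1,\dots,x_n \in A$ satisfying $\sum_{i=1}^n x_i^2 = 0$ together with the identification $W = \Spec\bigl(A/(x_1,\dots,x_n)\bigr)^{\red}$. Since $W$ is a finite set by assumption, the ring $A/(x_1,\dots,x_n)$ is zero-dimensional (its spectrum is topologically finite), and therefore so is $A/(x_1^2,\dots,x_n^2)$, because the two ideals $(x_1,\dots,x_n)$ and $(x_1^2,\dots,x_n^2)$ have the same radical in $A$ and hence the two quotient rings share the same underlying topological spectrum.

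Next, Corollary~\ref{cRtR} reduces the finiteness of $P(A)$ to the finiteness of $P\bigl(A/(x_1^2,\dots,x_n^2)\bigr)$. Since $A/(x_1^2,\dots,x_n^2)$ is a finitely generated $k$-algebra of Krull dimension zero, the first bullet of Proposition~\ref{plD} applies and gives $P\bigl(A/(x_1^2,\dots,x_n^2)\bigr) < \infty$. Combining these two steps yields $P(A) < \infty$, as required.

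There is no serious obstacle here: the argument is essentially a bookkeeping exercise once Theorem~\ref{tRtR} is in hand. The only small point that deserves explicit mention is the passage from the reduced closure $W$ to the non-reduced quotient $A/(x_1^2,\dots,x_n^2)$, which is justified by the fact that Krull dimension depends only on the topological space of the spectrum and both rings have the same reduction $A/\sqrt{(x_1,\dots,x_n)}$.
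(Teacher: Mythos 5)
Your proposal is correct and follows essentially the same route as the paper: invoke Theorem~\ref{tRtR} (via Corollary~\ref{cRtR}) to reduce to $A/(x_1^2,\dots,x_n^2)$, observe that the hypothesis forces this quotient to have Krull dimension zero, and conclude with the first bullet of Proposition~\ref{plD}. Your extra remark that $(x_1,\dots,x_n)$ and $(x_1^2,\dots,x_n^2)$ share the same radical, so the two quotients have the same underlying topological spectrum, merely makes explicit a step the paper leaves implicit, and is a welcome clarification.
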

\begin{proof}
Let $x_1,\dots , x_n \in A$ be the elements chosen in Proposition~\ref{cRtR}.
The Krull dimension of $A/(x_1^2,\dots , x_n^2)$ is zero by assumption.
Thus $P\bigl(A/(x_1^2,\dots , x_n^2)\bigr)$ is finite by Proposition~\ref{plD}.
\end{proof}

\begin{cor}\label{cDl1}
Let $k$ be a real closed field, and let $A$ be a finitely generated algebra over $k$.
Let $W$ be the Zariski closure of the $k$-points of $\Spec(A)$.
If $\dim(W) \leq 1$, then $P(A)$ is finite.
\end{cor}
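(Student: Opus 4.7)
The plan is to chain together Theorem~\ref{tRtR}, Corollary~\ref{cRtR}, and the second clause of Proposition~\ref{plD}, after performing a preliminary reconciliation of two notions of $W$.

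Specifically, note that in the statement of this corollary $W$ denotes the Zariski closure of the $k$-points of $\Spec(A)$, whereas Theorem~\ref{tRtR} refers to the closure of all formally real points. Over a real closed field these two sets have the same Zariski closure. One inclusion is clear, since every $k$-point has residue field $k$, which is formally real. For the other inclusion, I would take an arbitrary formally real point $p \in \Spec(A)$ and observe that $\overline{\{p\}}^{\red}$ is an integral $k$-scheme whose function field $k_p$ is formally real; by Proposition~\ref{prop1_2} its formally real closed points are Zariski dense in $\overline{\{p\}}$. Since every finite formally real extension of a real closed field is trivial, these formally real closed points are exactly the $k$-rational points of $\overline{\{p\}}^{\red}$, so $\overline{\{p\}} \subseteq W$.

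With this identification in place, I would apply Theorem~\ref{tRtR} to produce elements $x_1,\dots,x_n \in A$ with $\sum_{i=1}^n x_i^2 = 0$ and $W = \Spec\bigl(A/(x_1,\dots,x_n)\bigr)^{\red}$. Set $B := A/(x_1^2,\dots,x_n^2)$. Since $k$ is real closed we have $P(k)=1<\infty$, so Corollary~\ref{cRtR} reduces the finiteness of $P(A)$ to that of $P(B)$. Now $B$ is a finitely generated $k$-algebra whose nilradical equals $\sqrt{(x_1,\dots,x_n)}/(x_1^2,\dots,x_n^2)$, so $\Spec(B^{\red}) = W$ and hence $\dim(B) = \dim(W) \leq 1$. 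The second clause of Proposition~\ref{plD} then yields $P(B)<\infty$, whence $P(A)<\infty$.

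The argument has no genuine obstacle, as each step is a direct appeal to a previously established result; the only subtlety is the identification of the two meanings of $W$, which is essentially the Artin--Lang transfer principle already packaged inside Proposition~\ref{prop1_2}.
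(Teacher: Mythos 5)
Your proof is correct and follows essentially the same route as the paper: reduce via Theorem~\ref{tRtR} and Corollary~\ref{cRtR} to $A/(x_1^2,\dots,x_n^2)$, whose Krull dimension equals $\dim(W) \leq 1$, and invoke the second clause of Proposition~\ref{plD}. Your explicit reconciliation of ``closure of $k$-points'' with ``closure of formally real points'' over a real closed field is a small but welcome addition of rigor that the paper leaves implicit.
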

\begin{proof}
The proof follows the same approach as Corollary~\ref{cDl0} utilizing Proposition~\ref{plD} again.
\end{proof}

\begin{rmk}
We can always obtain the dimension of the real points of an affine algebraic scheme over $\R$ by the algorithm \cite{Lair21}.
\end{rmk}

\begin{emp} The following examples satisfy assumptions in Corollary~\ref{cDl1}.
    \begin{enumerate}[a]
		\item If $f$ is a square-free semidefinite polynomial in $\R[x,y,z]$, then we have $P\bigl(\R[x,y,z] / (f) \bigr)  < \infty$.
        As a special case, $P\bigl(\R[x,y,z]/(z^2+f(x,y))\bigr) < \infty$ when $f(x,y)$ is nonnegative. This resolves Problem 3.2(b) in \cite{Blac24}.
		\item Let $I$ be the ideal $(x^2y^4+x^4y^2-3z^2+3w^2-6xyw+1, z+xy-w)$ in $\R[x,y,z,w]$.
        Then the Motzkin polynomial $x^2y^4+x^4y^2-3x^2y^2+1$, which is nonnegative on $\R^2$, is contained in the ideal. 
        Hence, the dimension of the real points on the surface $\Spec(\R[x,y,z,w]/I)$ is less than two.
        Thus, $P\bigl(\R[x,y,z,w]/I\bigr) < \infty$.
	\end{enumerate}
\end{emp}

\begin{lem}\label{lEtR}
    Let $A$ be a formally real normal domain and $X=\Spec(A)$.
    Let $U=\Spec(B)$ be an affine open subset of $X$ such that $X^{(1)}\setminus U$ consists of formally real points, where $X^{(1)}$ is the set of height one points in $X$.
    Then $P(B) \geq P(A)$.
    
    Moreover, if $B$ is a localization of $A$, then $P(B) = P(A)$.
\end{lem}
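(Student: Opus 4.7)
The plan is to work locally at each height-one point of $X$ using normality. Since $A$ is a normal domain and $U$ is a nonempty open, we have injections $A \hookrightarrow B \subset \Frac(A)$, and normality yields $A = \bigcap_{p \in X^{(1)}} \mathcal{O}_{X,p}$. For any height-one $p \in U$, the inclusion $B \subset \mathcal{O}_{X,p}$ is automatic, so the real content is controlling the behaviour of elements of $B$ at the missing height-one points of $X$, all of which are formally real by hypothesis.

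The key local fact to establish is the following. Let $p \in X^{(1)}\setminus U$, so $\mathcal{O}_{X,p}$ is a DVR with uniformizer $\pi$ and formally real residue field $k_p$; if $a = \sum_{i=1}^n b_i^2$ with $b_i \in \Frac(A)$ and $a \in \mathcal{O}_{X,p}$, then every $b_i \in \mathcal{O}_{X,p}$. Suppose otherwise and set $m := -\min_i v_p(b_i) > 0$. Then $c_i := \pi^m b_i$ all lie in $\mathcal{O}_{X,p}$, and at least one $c_i$ is a unit. Yet $\sum c_i^2 = \pi^{2m} a \in \pi \mathcal{O}_{X,p}$, so reducing modulo $\pi$ yields a nontrivial identity $\sum \bar c_i^2 = 0$ in $k_p$, contradicting formal reality.

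Applied at every $p \in X^{(1)} \setminus U$, this, combined with the automatic $b_i \in \mathcal{O}_{X,q}$ for $q \in U \cap X^{(1)}$, forces $b_i \in A$ by normality. Hence any representation $a = \sum_{i=1}^n b_i^2$ in $B$ of an element $a \in A$ can already be realised in $A$, so $l_A(a) \leq l_B(a)$ for every $a \in S(A)$, and therefore $P(A) \leq P(B)$.

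For the moreover, assume $B = T^{-1}A$. Given $b \in S(B)$ with $l_B(b) = n$, write $b = \sum_{i=1}^n (a_i/t)^2$ using a common denominator $t \in T$. Then $t^2 b = \sum a_i^2 \in A$, so $t^2 b \in S(A)$; if $P(A)$ is finite we may rewrite $t^2 b = \sum_{j=1}^{P(A)} d_j^2$ with $d_j \in A$, and dividing by $t^2$ in $B$ gives $b = \sum_{j=1}^{P(A)}(d_j/t)^2$, so $l_B(b) \leq P(A)$. The case $P(A) = \infty$ is vacuous. Taking the supremum yields $P(B) \leq P(A)$, and equality follows from the first half. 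The main obstacle is the valuation-reduction step in the local fact: the whole argument pivots on formal reality of $k_p$, since otherwise $-1$ could be a sum of squares in $k_p$ and the contradiction would evaporate.
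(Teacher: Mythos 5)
Your proof is correct and takes essentially the same route as the paper's: show each $b_i$ is regular at the missing formally real height-one points, then use normality ($A = \bigcap_{p \in X^{(1)}} \mathcal{O}_{X,p}$) to conclude $b_i \in A$, and handle the localization case by clearing denominators. The only real difference is that you spell out the valuation-reduction argument establishing $v_p(b_i) \geq 0$, which the paper asserts with only the phrase ``because $q$ is a formally real smooth point,'' and you phrase the final globalization as an intersection of DVRs rather than via $\Gamma(V,\mathcal{O}_V)=A$; both are standard forms of the same normality fact.
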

\begin{proof}
    Let $a \in S(A)$ such that $l_A(a)=m$.
    Let $a= \sum_{i=1}^{n} b_i$ be an expression in $B$.
    Let $q \in X^{(1)}\setminus U$.
    Then $v_q(b_i)\geq 0$, because $q$ is a formally real smooth point.
    Therefore, there exists an open subscheme $V$ of $X$ containing $X^{(1)}$ such that $b_i$ is defined on $V$ for all $i$.
    Then $b_i$ is defined on $X$ because $\Gamma(V,\mathcal{O}_V)=A$ (see \cite[\href{https://stacks.math.columbia.edu/tag/031S}{Tag 031S}]{stacks-project}, \cite[\href{https://stacks.math.columbia.edu/tag/0AVZ}{Tag 0AVZ}]{stacks-project} and the distinguished triangle in \cite[\href{https://stacks.math.columbia.edu/tag/0A39}{Tag 0A39}]{stacks-project}).
    Hence, $n \geq m$.
    Thus, $P(B) \geq P(A)$.
    
    If $B=M^{-1}A$, where $M$ is a multiplicative subset of $A$, then $S_n(B)=M^{-1}S_n(A)$.
    Therefore, $P(B) \leq P(A)$ (see the first paragraph of the proof of Proposition~\ref{prop1_2}).
\end{proof}

\begin{lem}\label{lGtA}
    Let $X=\Spec(A)$ be an affine scheme, and let $(f) \subseteq A$ be a radical ideal.
    Let $g \in A$.
    If $V(f) \subseteq V(g)$ set-theoretically, then $f \mid g$ in $A$.
\end{lem}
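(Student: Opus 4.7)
The plan is to invoke the standard correspondence between radicals and vanishing loci in affine schemes. The set-theoretic inclusion $V(f) \subseteq V(g)$ means that every prime ideal of $A$ containing $f$ also contains $g$. Since the radical of an ideal equals the intersection of the primes containing it, namely $\sqrt{(f)} = \bigcap_{\mathfrak{p} \supseteq (f)} \mathfrak{p}$, this yields $g \in \sqrt{(f)}$.

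Next, I would use the hypothesis that $(f)$ is a radical ideal: by definition $\sqrt{(f)} = (f)$, so $g \in (f)$. Unwinding what it means for $g$ to lie in the principal ideal $(f)$, there exists $h \in A$ with $g = fh$, i.e., $f \mid g$ in $A$.

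There is really no obstacle here: the result is essentially a restatement of the radical–variety dictionary in the special case of a principal radical ideal, and no Noetherian or finiteness assumption on $A$ is needed, since the identification $\sqrt{I} = \bigcap_{\mathfrak{p} \supseteq I}\mathfrak{p}$ is valid in any commutative ring. The only minor subtlety to flag is that ``$V(f) \subseteq V(g)$ set-theoretically'' refers to the subsets of $\Spec(A)$ cut out by the two ideals, which is the standard interpretation.
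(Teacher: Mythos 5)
Your proof is correct and uses essentially the same idea as the paper, just in the language of commutative algebra rather than schemes: the paper observes that $V(f)$ is reduced and factors the inclusion through $V(g)^{\mathrm{red}}$ to get a surjection $A/(g) \twoheadrightarrow A/(f)$, while you invoke $\sqrt{(f)} = \bigcap_{\mathfrak{p}\supseteq (f)}\mathfrak{p}$ directly. Both arguments reduce to the same fact, $g \in \sqrt{(f)} = (f)$, and your version is arguably the more elementary and transparent phrasing.
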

\begin{proof}
    Since $V(f)$ is reduced, there is a canonical morphism $V(f) \hookrightarrow V(g)^{\red}$.
    The morphism of schemes $V(f) \hookrightarrow V(g)^{\red} \hookrightarrow V(g)$ corresponds to the ring homomorphism $A/(g) \twoheadrightarrow A/(f)$.
    Thus, $g \in (f)$.
\end{proof}

Now we prove the main theorem of our paper.
\begin{thm}\label{MT}
Let $X$ be an integral projective surface over $\R$, $\bO_X(1)$ a very ample invertible sheaf on $X$, and $\widetilde{X}$ the normalization of $X$. Let $U=X_{s\neq 0}$ be the open subscheme of $X$ defined by a nonzero section $s \in \Gamma(X,\bO_X(1))$.
If $\bigl(\widetilde{X}_{s=0}\bigr)^{\red}$ is formally real, then $P(U)=\infty$.
\end{thm}
\begin{proof}
Since the generic points of $\widetilde{X}_{s=0}$ are formally real smooth height one points of $\widetilde{X}$, $\widetilde{X}$ is formally real.

By Corollary~\ref{cnBT}, there exists $s' \in \Gamma(X,\bO_X(1))$ such that $X_{s'=0}$ is formally real and integral, and the generic point $\omega$ of $X_{s'=0}$ lies in $X_{\sm}$.
Since $\bO_X(1)$ is very ample, $U$ is affine.
We have $\Gamma(U\cap X_{s\neq 0})=\Gamma(U)_{\frac{s'}{s}}$.
Thus, $P(U\cap X_{s'\neq 0}) \leq P(U)$.
We need only show that $P(U\cap X_{s'\neq 0}) = \infty$.
Let $V$ be the normalization of $X_{s'\neq 0}$, and let $V_0$ be the normalization of $U\cap X_{s'\neq 0}$.
Then the generic points of $V \setminus V_0$ are formally real by assumption.
Denote $\Gamma(V)$ by $B$.
By Lemma~\ref{lEtR}, for $a \in S(B)$, $l_B(a)=l_{\Gamma(V_0)}(a)$.

Denote $\Gamma(X_{s'\neq 0})$ by $A$.
If we can find $f_m \in S(A)$ such that $l_B(f_m) \geq m$, then $l_{\Gamma(V_0)}(f_m) \geq m$ and $l_{\Gamma(U\cap X_{s'\neq 0})}(f_m) \geq m$.

We proceed by induction on the lengths in $B$ of elements in $A$.
For the base case, $l_B(1)=1$.
Assume we have found $0 \neq g \in S(A)$ with $l_B(g)=m-1$, where $m>1$.

Let $\deg_{\omega}=-v_{\omega}$ be the real degree function as in Example~\ref{eRDG}(e).
Choose an even integer $d>\max\{2N_X,deg_{\omega}(g)\}$, where $N_X$ is defined in Proposition~\ref{pSoR}.
By Corollary~\ref{cnBT}, there exists $f' \in \Gamma(X,\bO_X(d))$ such that  $X_{f'=0}$ is formally real and integral, and the generic point of $X_{f'=0}$ lies in $X_{\sm}$.
Then by Proposition~\ref{pSoR}, there exists $t' \in \Gamma\bigl(X,\bO_X\bigl(\frac{3}{2}d\bigr)\bigr)$ such that $X_{f'=0} \cap X_{t'=0}$ consists of real smooth points of $X_{f'=0}$.
Define $f=\frac{f'}{s'^d}$ and $t=\frac{t'}{s'^{\frac{3}{2}d}}$, noting that $f,t \in A$.

Let $G=t^2+f^2g \in S(A) \cap S_m(B)$.
Assume that $G = \sum_{i=1}^{n} G_i^2$ in $B$, where $n$ is a positive integer.
To prove $l_B(G)=m$, we need to show $n \geq m$.

Define $K= \frac{G}{t^2}$ and $K_i=\frac{G_i}{t}$.
Note that the generic point of $\widetilde{X}\setminus V$ is $\omega$.
Since the generic points of $\widetilde{X}_{t' \neq 0} \setminus V$ are formally real, by the definition of $d$, $K$ is well-defined on an open subscheme of $\widetilde{X}_{t' \neq 0}$ containing $(\widetilde{X}_{t' \neq 0})^{(1)}$.
By Lemma~\ref{lEtR}, $K \in \Gamma(\widetilde{X}_{t' \neq 0})$ and $K_i\in \Gamma(\widetilde{X}_{t' \neq 0})$.
Let $C$ be the affine curve $\widetilde{X}_{f'=0} \cap \widetilde{X}_{t' \neq 0}$.
Since $\widetilde{X}_{f'=0}$ is formally real, $C(\R)$ is nonempty.
Let $p \in C(\R)$. 
We have $1 = \sum_{i=1}^{m-1} K_i(p)^2$.
After an orthogonal transformation, assume $K_1(p)=1$ and $K_i(p)=0$ for $i>1$ . Define $a_1= K_1 -1$ and  $a_i= K_i$ for $i>1$.
Let $b_1= t a_1=G_1-t$ and $b_i= t a_i=G_i$ for $i>1$.

Let $q \in \widetilde{X}_{f'=0} \setminus C$.
By the definition of $t'$, $q$ is a real smooth point in $\widetilde{X}_{f'=0}$.
Following the argument in Lemma~\ref{lEtR}, for each index $i$, the function $a_i$ extends to a global section over the entire space $\widetilde{X}_{f'=0}$.
Due to the isomorphism $\R\xrightarrow{\sim} \Gamma(\widetilde{X}_{f'=0})$, we have $a_i=0$.
    Let $\gamma$ be the generic point of $\widetilde{X}_{f'=0}$. 
    we conclude that $b_i(\gamma)=0$.
    By Lemma~\ref{lGtA}, $f \mid b_i$ in $B$.
	
    Let $b_i = f c_i$ in $B$.
	We have $G = t^2+f^2g=t^2+2ftc_1+ \sum_{i=1}^{n}f^2c_i^2$.
	Rearranging, $t c_1=\frac{1}{2} \cdot f \cdot (g -\sum_{i=1}^{n}b_i^2)$.
    Since $t(\gamma) \neq 0$, $c_1(\gamma)=0$.
    By Lemma~\ref{lGtA} again, $f \mid c_1$ in $B$.
	Let $c_1=fh$.
	Substituting, we get $g=2th+f^2h^2+ \sum_{i=2}^{n}c_i^2$.
    We have $\deg_{\omega}(b) \geq 0$ for all $b \in B \setminus \{0\}$ by the definition of $B$ and the isomorphism $\R\xrightarrow{\sim}\Gamma(\widetilde{X},\bO_{\widetilde{X}})$.
    Note that $\deg_{\omega}(t)\leq \frac{3}{2}\deg_{\omega}(f)<\deg_{\omega}(f^2)$.
	If $h \neq 0$, then \[\deg_{\omega}(f^2h^2) > \deg_{\omega}(2th) \quad \text{and} \quad \deg_{\omega}\Bigl(f^2h^2+ \sum_{i=2}^{n}c_i^2\Bigr) \geq \deg_{\omega}(f^2h^2).\]
	Therefore, $\deg_{\omega}(g) \geq \deg_{\omega}(f^2h^2) \geq \deg_{\omega}(f^2)$ by the remark of Definition \ref{def2_1}.
	However $\deg_{\omega}(f^2)=2\deg_{\omega}(f)>\deg_{\omega}(g)$, forcing $h=0$.
    Since $l_B(g)=m-1$, $n \geq m$.
\end{proof}

\begin{rmk}
    It should be noted that the conditions of the theorem are not necessary.
    Let $X$ be the normal projective surface $\Proj\bigl(\R[x,y,z,w]/(w^2z^2-x^4-y^4-w^4)\bigr)$, and let $U=X_{w \neq 0}$.
    Then $P(U)=\infty$ (see \cite[Theorem 2.3]{Blac24}).
    Here, $X_{w = 0}$ is a reduced purely nonreal projective curve.
\end{rmk}

\begin{cor}\label{cMSC}
    Let $X \hookrightarrow \bP^n_{\R}$ be a formally real integral normal projective surface, and let $s \in \Gamma(X,\bO_X(d))$ be a nonzero section (where $d$ is a positive integer), such that $(X_{s=0})^{\red}$ is formally real.
    Then $P(X_{s \neq 0})= \infty$.
\end{cor}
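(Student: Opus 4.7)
The plan is to deduce this from Theorem \ref{MT} in two steps: first replace the embedding so that $s$ becomes a section of a very ample $\bO_X(1)$, and second verify that $\partial_{X(\C)}(X_{s\neq 0}(\R))$ is Zariski dense in $X_{s=0}$.

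For the first step I would compose $X\hookrightarrow\bP^n_{\R}$ with the $d$-uple Veronese embedding $\bP^n_{\R}\hookrightarrow\bP^N_{\R}$. Under this composite embedding the restricted $\bO(1)$ pulls back to the original $\bO_X(d)$, so $s$ becomes a section of the new (still very ample) $\bO_X(1)$, while the underlying scheme $X$ and the open subscheme $X_{s\neq 0}$ are unaffected.

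For the boundary condition, let $D_1,\dots,D_k$ be the irreducible components of $X_{s=0}^{\red}$, each with formally real generic point $\eta_i$ by hypothesis. Normality of $X$ gives regularity in codimension one, so $X$ is smooth at each $\eta_i$ and the ideal of $D_i$ there is principal; thus I can pick a smooth affine open $W_i\ni\eta_i$, disjoint from $D_j$ for $j\neq i$, on which $D_i\cap W_i$ is cut out by a single regular function $t_i$ and $s|_{W_i}=t_i^{n_i}u_i$ for some unit $u_i$ and integer $n_i\geq 1$. Because $D_i\cap W_i$ is formally real integral, Proposition \ref{prop1_2} yields a Zariski-dense set of real closed points in $D_i\cap W_i$, hence in $D_i$. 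At any such $p$, the implicit function theorem supplies real local coordinates $(t_i,y)$ in a neighborhood of $p$ in $X(\R)$, so that $X_{s\neq 0}(\R)$ locally coincides with $\{(t,y)\in\R^2 : t\neq 0\}$; letting $t\to 0$ along real sequences shows that $p$ lies in the closure of $X_{s\neq 0}(\R)$ in the standard topology on $X(\C)$. Since $X(\R)$ has empty interior in the four-real-dimensional space $X(\C)$ near any smooth point, this closure coincides with $\partial_{X(\C)}(X_{s\neq 0}(\R))$. Varying $i$, the boundary is Zariski dense in $X_{s=0}^{\red}=X_{s=0}$ (same underlying set).

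The main obstacle is that $X_{s=0}$ need not be reduced, which blocks a direct appeal to Lemma \ref{lSaR}; the remedy is to pass to $X_{s=0}^{\red}$ and exploit normality to write $s$ locally as $t_i^{n_i}u_i$, so that the implicit function theorem still produces real smooth points of each component in the boundary of $X_{s\neq 0}(\R)$, regardless of the multiplicities $n_i$. With the density verified, Theorem \ref{MT} applied to the pair $(X,s)$ immediately gives $P(X_{s\neq 0})=\infty$.
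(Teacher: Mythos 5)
Your argument is correct in outline and ends the same way the paper does — verify the boundary-density hypothesis and then apply Theorem~\ref{MT} with $\bO_X(d)$ playing the role of the very ample $\bO_X(1)$ — but your density verification goes by a more local route than the paper's. The paper's proof is a short dimension count: by Lemma~\ref{lZtL} the set $X_{s=0}(\R)$ has covering dimension at most one, while every nonempty Euclidean open in $X_{\sm}(\R)$ has dimension two, so $U_{\sm}(\R)$ is automatically dense in $X_{\sm}(\R)$; since $U(\R)$ has empty interior in $X(\C)$, this gives $\partial_{X(\C)}(U(\R))\supseteq H(\R)$ with $H=X_{s=0}\cap X_{\sm}$, which is Zariski dense in $X_{s=0}$ by normality together with Proposition~\ref{prop1_2}. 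Your approach — local normal form $s=t_i^{n_i}u_i$ near each component $D_i$, then let $t_i\to 0$ — also works, and has the virtue of handling the multiplicities of $X_{s=0}$ explicitly rather than implicitly, but it has one gap: the implicit function theorem produces a coordinate pair $(t_i,y)$ at a real point $p$ only when $dt_i|_p\neq 0$, i.e. when $p$ is a \emph{smooth} point of $D_i$, and you do not restrict to such points. The fix is easy — either restrict to the smooth real locus of each $D_i$ (still Zariski dense by Proposition~\ref{prop1_2}, since the singular locus of a curve is finite), or observe that in any smooth chart of $X$ at $p$ the zero locus of the nonzero analytic function $t_i$ has dense complement regardless of whether $p$ is a smooth point of $D_i$. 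With either repair the argument is sound.
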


Now we provide a criterion for Theorem~\ref{MT} without computing the normalizations.

\begin{lem}\label{lItN}
    Let $X$ be an integral projective scheme over $\R$, and let $U$ be an open affine subscheme of $X$.
    Let $H=X\setminus U$ be the reduced closed subscheme of $X$.
    Denote the normalization of $X$ and $U$ by $\widetilde{X}$ and $\widetilde{U}$, respectively.
    If the boundary $\partial_{X(\C)}\bigl(U(\R)\bigr) \subseteq H(\R)$, taken in the standard topology, is Zariski dense in $H$, then the same statement holds for $\widetilde{U} \hookrightarrow \widetilde{X}$.
    Moreover, $X$ and $H$ are formally real.
\end{lem}
\begin{proof}
    Let $H_{>}$ denote the part of $H$ with codimension greater than one in $X$, and let $H_1$ be the equidimensional part of $H$ of codimension one in $X$.
    We begin by proving that $H_{>}=\emptyset$.
    Let $\phi : X \to \widetilde{X}$ be the normalization morphism.
    Define $V=\phi^{-1}(X\setminus H_1)$.
    Then $\widetilde{U}$ is an open subscheme of $V$ containing $V^{(1)}$.
    The proof of Lemma~\ref{lEtR} yields $\Gamma(V)=\Gamma(\widetilde{U})$, which induces a morphism $V \to \Spec\bigl(\Gamma(V)\bigr)=\widetilde{U}$.
    The morphism $V \to \widetilde{U}$ naturally admits a section $\widetilde{U} \hookrightarrow V$, implying that $\widetilde{U} \hookrightarrow V$ is a closed immersion.
    Consequently, we obtain $\widetilde{U}=V$, and therefore $H_{>}=\emptyset$.
    
    Since $\overline{U_{\sing}(\R)} \cap H$ lies in a proper closed subscheme of $H$ with dimension $\dim(H)-1$, it follows that $\partial_{X(\C)}\bigl(U_{\sm}(\R)\bigr)$ is Zariski dense in $H$.
    Define $M=\phi^{-1}(H)$.
    By Chevalley's Theorem \cite[\href{https://stacks.math.columbia.edu/tag/00FE}{Tag 00FE}]{stacks-project}, $\phi\colon M \to H$ is an open morphism.
    By \cite[Chapter~3,\S3]{Grau84}, $\phi\colon M(\C) \to H(\C)$ is an open holomorphic map.
    
    Consider a point $p \in M(\C)$ with $\phi(p) \in \partial_{X(\C)}\bigl(U_{\sm}(\R)\bigr)$, and let $V \subseteq M(\C)$ be a Euclidean open subset containing $p$.
    Then $\phi(V)$ is a Euclidean open subset in $H(\C)$.
    Thus, $U_{\sm}(\R) \cap \phi(V) \neq \emptyset$.
    Hence, $p \in  \partial_{\widetilde{X}(\C)}\bigl(\widetilde{U}_{\sm}(\R)\bigr)$.
    Note that $\phi$ is an open morphism in the Zariski topology, which implies that the inverse image of a dense subset is dense.
    We conclude that $\partial_{\widetilde{X}(\C)}\bigl(\widetilde{U}_{\sm}(\R)\bigr)$ is Zariski dense in $M$.
    
    Since $X$ is projective, $H \neq \emptyset$.
    If $X$ is nonreal, then the real points of $U$ are contained in a proper subscheme $C$ of $U$.
    Then, the Krull dimension of $\overline{C(\R)} \cap H$ is less than $\dim(H)$, which is not Zariski dense in $H$.
    Since the real points of $H$ are Zariski dense in $H$, $H$ is formally real.
\end{proof}

\begin{cor}\label{cMT}
    Let $X \hookrightarrow \bP^n$ be an integral projective surface over $\R$, and let $s \in \Gamma(X,\bO_X(1))$ be a nonzero section such that the boundary $\partial_{X(\C)}\bigl(X_{s\neq 0}(\R)\bigr)$, taken in the standard topology, is Zariski dense in $X_{s=0}$.
    
    Then $P\bigl(\Gamma(X_{s\neq 0})\bigr)=\infty$.
\end{cor}
\begin{proof}
    Let $\widetilde{X}$ be the normalization of $X$.
    By Lemma~\ref{lItN}, $\bigl(\widetilde{X}_{s=0}\bigr)^{\red}$ is formally real.
\end{proof}

Let $X$ be an algebraic scheme over a field $k$, and let $\mathcal{L}$ be an invertible sheaf on $X$.
Define $\Gamma_*(X,\mathcal{L})=\bigoplus_{n \geq 0} \Gamma(X, \mathcal{L}^n)$.
We define \[P_{\mathcal{L}}(m)=\sup\left\{l_{\Gamma_*(X,\mathcal{L})}(a) | a \in \Gamma(X, \mathcal{L}^m)\right\}.\]
Note that if $X$ is a projective scheme over $\R$ and $\mathcal{L}$ is very ample, then $P_{\mathcal{L}}(m)$ is finite (see \cite{Choi95}).
\begin{cor} \label{cHMT}
    Let $X$ be a formally real integral projective surface over $\R$, and let $\mathcal{L}$ be an ample invertible sheaf on $X$.
    Then \[\lim_{m \to +\infty}P_{\mathcal{L}}(2m)=\infty.\]
\end{cor}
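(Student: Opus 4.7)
The plan is to perform the inductive construction from the proof of Theorem~\ref{MT} directly in the graded section ring $\Gamma_*(X,\mathcal L)$ (after replacing $X$ by its normalization $\widetilde X\to X$ if necessary, which is harmless because $l_{\Gamma_*(X,\mathcal L)}(a)\ge l_{\Gamma_*(\widetilde X,\pi^{*}\mathcal L)}(a)$ whenever $a$ comes from $X$), rather than in an affine chart, so that the degree of each constructed element can be made to equal any sufficiently large integer. Starting from $a_0=1\in\Gamma(X,\bO_X)$ with $m_0=0$, I inductively build sections $a_n\in\Gamma(X,\mathcal L^{2m_n})$ with $l_{\Gamma_*(X,\mathcal L)}(a_n)=n$. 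Let $D_0$ be a threshold depending only on $X$ and $\mathcal L$ that suffices for Corollary~\ref{cnBT} and Proposition~\ref{pSoR}. For the inductive step, pick any integer $d\ge\max(D_0,m_n+1)$; by Corollary~\ref{cnBT} choose $F\in\Gamma(X,\mathcal L^{d})$ so that $C:=X_{F=0}$ is formally real and integral (hence $(F)$ is a prime ideal of $\Gamma_*(X,\mathcal L)$), by Proposition~\ref{pSoR} choose $T\in\Gamma(X,\mathcal L^{m_n+d})$ with $T|_C$ nonzero and supported on $C_{\sm}(\R)$ (in particular $F\nmid T$), and set $a_{n+1}:=T^{2}+F^{2}a_n\in\Gamma(X,\mathcal L^{2m_{n+1}})$ with $m_{n+1}:=m_n+d$.

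The upper bound $l_{\Gamma_*}(a_{n+1})\le n+1$ is immediate. For the lower bound I argue by contradiction in the spirit of Theorem~\ref{MT}: assume $a_{n+1}=\sum_{i=1}^{n'}v_i^{2}$ in $\Gamma_*(X,\mathcal L)$ with $n'\le n$ and, after the standard homogeneity reduction, $v_i\in\Gamma(X,\mathcal L^{m_{n+1}})$. Restricting to $C$ yields $T|_C^{2}=\sum v_i|_C^{2}$; after an orthogonal change of basis at a point $p\in C_{\sm}(\R)$ with $T(p)\ne 0$, the rational functions $K_i-\delta_{i1}$ (where $K_i:=v_i/T\in K(C)$) extend across every zero of $T$ on $C_{\sm}(\R)$ because the sum-of-squares identity in a formally real DVR gives $v_q(v_i)\ge v_q(T)$, as in the argument of Lemma~\ref{lEtR}; they then extend to global regular functions on the projective normalization $\widetilde C$, and since $\Gamma(\widetilde C,\bO_{\widetilde C})=\R$ they must vanish identically. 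Hence $v_1-T$ and $v_2,\dots,v_{n'}$ vanish along $C=X_{F=0}$; the graded version of Lemma~\ref{lGtA} produces $v_1=T+Fc_1$ and $v_i=Fc_i$ for $i\ge 2$, with each $c_i\in\Gamma(X,\mathcal L^{m_n})$. Substituting into $a_{n+1}=T^{2}+F^{2}a_n$ and dividing by $F^{2}$ gives $2Tc_1=F(a_n-\sum_ic_i^{2})$; since $(F)$ is prime and $F\nmid T$, we obtain $F\mid c_1$ in $\Gamma_*(X,\mathcal L)$, but $\deg c_1=m_n<d=\deg F$ forces $c_1=0$. Therefore $a_n=\sum_{i\ge 2}c_i^{2}$ is a sum of at most $n-1$ squares in $\Gamma_*(X,\mathcal L)$, contradicting $l_{\Gamma_*}(a_n)=n$.

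Because the increment $d$ may be taken to be any integer at least $\max(D_0,m_n+1)$, by inductively selecting $d_1,\dots,d_n$ one can realise $m_n=\sum_{i=1}^{n}d_i$ as any integer $m$ satisfying $m+1\ge 2^{n-1}(D_0+1)$; consequently $P_{\mathcal L}(2m)\ge n$ for every $m\ge 2^{n-1}(D_0+1)-1$, which yields $\lim_{m\to+\infty}P_{\mathcal L}(2m)=\infty$. The main obstacle will be transplanting the proof machinery of Theorem~\ref{MT}--the restriction to $C$, the extension-to-$\widetilde C$ argument modelled on Lemma~\ref{lEtR}, and the divisibility-by-vanishing argument of Lemma~\ref{lGtA}--from the affine coordinate ring of a chart to the graded ring $\Gamma_*(X,\mathcal L)$, and handling the non-normal case by systematically passing to the normalization $\widetilde X\to X$ just as Theorem~\ref{MT} already does in its proof.
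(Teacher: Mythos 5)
Your approach is genuinely different from the paper's. The paper does not rerun the inductive machinery of Theorem~\ref{MT} inside $\Gamma_*(X,\mathcal L)$; instead it picks $s\in\Gamma(X,\mathcal L^N)$ with $X_{s=0}$ formally real integral (so that $\Gamma(X_{s\neq 0})=B_{((s))}$, $B:=\Gamma_*(X,\mathcal L)$), invokes the already-proved Theorem~\ref{MT} on the affine chart $X_{s\neq0}$ to produce $g$ of length $>n$, clears the $s$-denominators to get $g'=gs^{2m_0}\in B_{2m_0N}$, and uses the homogeneity reduction (Corollary~\ref{cRP}) to conclude $l_B(g')>n$; it then fills in all large even degrees by the monotonicity $P_{\mathcal L}(2m+2N_0)\ge P_{\mathcal L}(2m)$, proved by multiplying by $(s')^2$ with $(s')$ a formally real prime of $B$. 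Your proposal collapses both stages into one by carrying out the $a_{n+1}=T^2+F^2a_n$ recursion directly in $\Gamma_*(X,\mathcal L)$ with a freely chosen degree increment $d$; this avoids the localization/clearing-denominators detour and the monotonicity lemma, at the cost of replaying the whole Theorem~\ref{MT} argument in the graded setting. Your verification of the core steps is right: after the homogeneity reduction, restricting to $C=X_{F=0}$ and using that the zeros of $T|_C$ lie in $C_{\sm}(\R)$ makes each $K_i=v_i/T$ regular on all of $C$, hence a constant in $\Gamma(C,\bO_C)=\R$; and $\Gamma_*(X,\mathcal L)/(F)\hookrightarrow\Gamma_*(C,\mathcal L|_C)$ really does make $(F)$ prime and $F\mid(v_1-T)$, $F\mid v_i$ with cofactors of degree $m_n$, so the degree comparison $m_n<d$ kills $c_1$.

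Two things need repair. First, your base step does not meet the hypothesis of Proposition~\ref{pSoR}: you want $T\in\Gamma(X,\mathcal L^{m_n+d})$ with $T|_C$ supported on $C_{\sm}(\R)$, and the proposition requires $\deg T-\deg F\ge N_X$, i.e.\ $m_n\ge N_X$; at $n=0$ you have $m_0=0$, so no such $T$ is guaranteed (enlarging the floor $D_0$ for $d$ does not help, since $D_0$ bounds $\deg F$, not $\deg T-\deg F$). This is easily patched — start the induction at $n=1$ with $a_1=T_0^2$ for some $T_0\in\Gamma(X,\mathcal L^{m_1})$, $m_1\ge N_X$, rather than from $a_0=1$ (which also fixes the off-by-one $l(1)=1\ne 0$) — but as written the recursion cannot begin. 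Second, the opening "replace $X$ by its normalization" remark is a red herring and, if taken literally, is circular: if you run the construction on $\widetilde X$, the elements $a_n$ live in $\Gamma_*(\widetilde X,\pi^*\mathcal L)$ and there is no reason they lie in the subring $\Gamma_*(X,\mathcal L)$, so the inequality you cite cannot be applied to them. In fact your argument never uses normality of $X$ at all — the extension step happens inside the curve $C$, where $T$'s nonvanishing at $C_{\sing}$ already gives regularity of $K_i$ there — so you should simply drop that remark rather than try to justify it. A last bookkeeping point: Corollary~\ref{cnBT} and Proposition~\ref{pSoR} are stated for a very ample $\bO_X(1)$; when $\mathcal L$ is ample but not very ample you must work with $\mathcal L^{N_1}$ (for $N_1$ making it very ample), and then the degree arithmetic has to be tracked with respect to $\mathcal L$ rather than $\mathcal L^{N_1}$, which your "threshold $D_0$" phrasing glosses over but can absorb with a little care.
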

\begin{proof}
    Denote $\Gamma_*(X,\mathcal{L})$ by $B$.
    There exists $N \in \N$, such that for each $a \geq N$ $\mathcal{L}^a$ is very ample.
    Let $s \in \Gamma(X,\mathcal{L}^N)$ such that $X_{s=0}$ is formally real and integral, and the generic point of $X_{s=0}$ lies in $X_{\sm}$.
    Since $X_{s=0}$ and $\widetilde{X}_{s=0}$ are birational, $\widetilde{X}_{s=0}$ is formally real.
    Therefore, $P(X_{s \neq 0}) = \infty$.
    
    Let $n \in \N$.
    By the proof of Theorem~\ref{MT}, we obtain $g \in \Gamma(X_{s \neq 0})$ such that $l_{\Gamma(X_{s \neq 0})}(g)>n$.
    
    By \cite[\href{https://stacks.math.columbia.edu/tag/01Q5}{Tag 01Q5}]{stacks-project} and \cite[\href{https://stacks.math.columbia.edu/tag/01W6}{Tag 01W6}]{stacks-project}, $X=\Proj(B)$.
    Note that $B$ is an integral domain and $\Gamma(X_{s \neq 0})=B_{\left((s)\right)}$.
    Hence, there exists an integer $m_0$ such that $g'=gs^{2m_0} \in \Gamma(X,\mathcal{L}^{2m_0N})$ and $g'=\sum_{i=1}^{t}g_i^2 \in S(B)$.
    By Corollary~\ref{cRP}, $B$ being formally real implies that each $g_i$ is a homogeneous element of $B$.
    Defining $f_i=\frac{g_i}{s^{m_0}}$, we obtain $g=\sum_{i=1}^{t}f_i^2$.
    Consequently, $t > n$.
    
    Let $N_0 \geq N$ be an integer.
    Then $\mathcal{L}^{N_0}$ is very ample.
    There exists $s' \in \Gamma(X,\mathcal{L}^{N_0})$ such that $X_{s'=0}$ is formally real and integral, and the generic point of $X_{s'=0}$ lies in $X_{\sm}$.
    Since $B/(s')$ is a formally real integral domain, for any $f \in B$ the equality $l_B(f)=l_B\bigl(f(s')^2\bigr)$ holds.
    We conclude that $P_{\mathcal{L}}(2m+2N_0)\geq P_{\mathcal{L}}(2m)$ for all $m\in \Z^+$.
\end{proof}

\begin{lem}\label{lDoC}
    Let $X \hookrightarrow \bP^n_k$ be an integral projective scheme over a field $k$.
    Let $s_1 \in \Gamma(X,\bO_X(m_1))$ and $s_3 \in \Gamma(X,\bO_X(m_3))$ be global sections satisfying the following conditions:
    \begin{enumerate}[1]
      \item $X$ is either a curve with $\zZ(s_1) \subseteq X_{\sm}$ or a normal scheme;
      \item the divisor $\zZ(s_3)-\zZ(s_1)$ is effective on $X_{\sm}$.
    \end{enumerate}
    Then there exists $s_2 \in \Gamma(X,\bO_X(m_3-m_1))$ satisfying $s_1s_2=s_3$.
\end{lem}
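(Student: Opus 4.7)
The plan is to take the natural candidate $s_2 := s_3/s_1$, interpreted \emph{a priori} as a rational section of $\bO_X(m_3 - m_1)$, and verify that it is actually regular everywhere on $X$. On the open subset $U := X \setminus \zZ(s_1)$, the section $s_1$ trivializes $\bO_X(m_1)|_U$, so $s_2$ is tautologically a regular section of $\bO_X(m_3-m_1)$ on $U$. The content of the lemma is therefore to extend $s_2$ regularly across $\zZ(s_1)$, which I would handle in the two cases separately.

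For the curve case, any singular point $p \in X_{\sing}$ lies outside $\zZ(s_1)$ by hypothesis, so regularity at $p$ is already handled by the previous paragraph. For $p \in \zZ(s_1) \subseteq X_{\sm}$, the local ring $\bO_{X,p}$ is a DVR; in local trivializations of $\bO_X(m_1)$ and $\bO_X(m_3)$ near $p$, the effectivity of $\zZ(s_3) - \zZ(s_1)$ at $p$ amounts to $\mathrm{ord}_p(s_3) \geq \mathrm{ord}_p(s_1)$, so $s_3/s_1 \in \bO_{X,p}$, and $s_2$ extends regularly to $p$.

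For the normal case, $X$ is regular in codimension one, so every codimension-one point of $X$ automatically lies in $X_{\sm}$. The same DVR-order argument as above then shows that $s_2$ is regular at every codimension-one point of $X$. Together with regularity on $U$, this gives an open subset $V \subseteq X$ on which $s_2$ is regular and whose complement has codimension at least two. I would then invoke the algebraic Hartogs lemma for the invertible (and in particular reflexive) sheaf $\bO_X(m_3-m_1)$ on the normal Noetherian integral scheme $X$ (cf.\ \cite[\href{https://stacks.math.columbia.edu/tag/031S}{Tag 031S}]{stacks-project}, \cite[\href{https://stacks.math.columbia.edu/tag/0AVZ}{Tag 0AVZ}]{stacks-project}), which identifies $\Gamma(X,\bO_X(m_3-m_1))$ with $\Gamma(V,\bO_X(m_3-m_1))$ and so extends $s_2$ uniquely to a global section.

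The main obstacle is the codimension-two Hartogs-type extension in the normal case; once this is granted, everything else reduces to comparing orders at discrete valuation rings using the given effectivity hypothesis. The curve case by contrast needs no higher-codimension input — the assumption $\zZ(s_1) \subseteq X_{\sm}$ is precisely what removes the (finitely many) singular points from consideration.
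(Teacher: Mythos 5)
Your proof is correct and follows essentially the same approach as the paper's: view $s_2 = s_3/s_1$ as a rational section, use the DVR order comparison at codimension-one points to show it is regular there, and in the normal case extend across codimension $\geq 2$ by algebraic Hartogs (the paper does this by "following the argument in Lemma~\ref{lEtR}," which cites exactly the Stacks tags you list). The only packaging difference is that the paper sets up the identification of each $\bO_X(r)$ with an invertible subsheaf $\mathcal{L}_r$ of the constant sheaf $\underline{K(X)}$ via a fixed rational section $x_0$ and works with local trivializations over standard affine opens $U_i$, whereas you reason directly about rational sections; the underlying argument is the same.
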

\begin{proof}
    Denote by $B_d=\Gamma(X,\bO_X(d))$ and $B=\Gamma_*(X,\bO_X(1))$.
    Fix a nonzero section $x_0 \in \Gamma(X,\bO_X(1))$. 
    For every nonnegative integer $d$, this induces a natural injective homomorphism $B_d \hookrightarrow K(X), x \mapsto \frac{x}{x_0^d}$, which extends to a ring homomorphism $\phi:B \to K(X)$.
    Geometrically, this corresponds to choosing a $K(X)$-rational point in the generic fiber of the affine cone $\A(X) \to X$.
    
    Let $ x_1, \dots, x_t$ be nonzero global sections in $\Gamma(X,\bO_X(1))$ such that $X=\bigcup^{t}_{i=0}X_{x_i \neq 0}$.
    Denote by $U_i$ the open subset $X_{x_i\neq 0}$.
    For any integer $r \in \Z$, the twisted sheaf $\bO_X(r)$ can be viewed as an invertible subsheaf $\mathcal{L}_r$ of $\underline{K(X)}$ with local trivializations $\mathcal{L}_r|_{U_i}=\phi(x_i^r)\bO_{U_i}$.
    We note that $\Gamma(X,\mathcal{L}_d)=\phi(B_d)$ holds for all nonnegative integers $d$. 
    
    Define $m_2=m_3-m_1$.
    When $X$ is a curve satisfying $\zZ(s_1) \subseteq X_{\sm}$, since the divisor $\zZ(s_3)-\zZ(s_1)$ is effective on $X_{\sm}$, the rational function $\frac{\phi(s_3)}{\phi(s_1x_i^{m_2})}=\frac{s_3}{s_1x_i^{m_2}}$ is regular on each $U_i$.
    When $X$ is normal, for each $i$, the rational function $\frac{s_3}{s_1x_i^{m_2}}$ is regular on an open subscheme $V_i \subseteq U_i$ containing all height one points, and thus it is regular on $U_i$ following the argument in Lemma~\ref{lEtR}.
    These results imply that $\frac{\phi(s_3)}{\phi(s_1)}$ defines a global section of $\mathcal{L}_{m_2}$.
    Consequently, $s_2:=\frac{s_3}{s_1}$ lies in the graded component $B_{m_2}$.
\end{proof}

\begin{prop}
    Let $X \hookrightarrow \bP^n_{\R}$ be a formally real integral normal projective surface over $\R$.
    The Pythagoras numbers of formally real integral smooth affine curves contained in $X$ are unbounded.
\end{prop}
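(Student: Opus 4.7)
The plan is to follow the inductive construction in the proof of Theorem~\ref{MT} and extract, at each stage $m$, a formally real integral smooth affine curve $C_m \subseteq X$ together with an element in $\Gamma(C_m)$ of length at least $m-1$, which then gives $P(C_m) \geq m-1 \to \infty$.

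To start, by Corollary~\ref{cnBT} I would pick a section $s \in \Gamma(X,\bO_X(1))$ (replacing $\bO_X(1)$ by a positive twist if needed) so that $X_{s=0}$ is formally real, integral, and has its generic point in $X_{\sm}$; Corollary~\ref{cMSC} then yields $P(X_{s \neq 0}) = \infty$. Write $A = \Gamma(X_{s\neq 0})$. The proof of Theorem~\ref{MT} produces inductively elements $g_0 = 1, g_1, g_2, \ldots \in A$ with $l_A(g_m) \geq m$, where $g_m = t_m^2 + f_m^2 g_{m-1}$. At each stage, Corollary~\ref{cnBT} selects $f_m' \in \Gamma(X, \bO_X(d_m))$ cutting out a formally real integral smooth projective curve $D_m := X_{f_m'=0}$, and Proposition~\ref{pSoR} selects $t_m' \in \Gamma(X, \bO_X(3d_m/2))$ so that $D_m \cap X_{t_m'=0}$ consists of real smooth points of $D_m$.

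Next, I would set
\begin{equation*}
C_m := D_m \cap X_{s\neq 0} \cap X_{t_m'\neq 0},
\end{equation*}
which by the choices made is a formally real integral smooth affine curve inside $X$. The central claim is that the restriction $h_m := g_{m-1}|_{C_m} \in \Gamma(C_m)$ satisfies $l_{\Gamma(C_m)}(h_m) \geq m-1$. To prove this, I would assume a representation $h_m = \sum_{i=1}^n H_i^2$ in $\Gamma(C_m)$ with $n \leq m-2$, and then attempt to lift it to a sum-of-squares representation of $g_{m-1}$ in $A$ with at most $m-2$ terms, contradicting $l_A(g_{m-1}) \geq m-1$. The lifting mechanism mirrors the contradiction step in Theorem~\ref{MT}: Lemma~\ref{lEtR} extends each $H_i$ across the formally real points of $D_m \setminus C_m$; the projective closure $D_m$, being a formally real integral projective curve with $\Gamma(D_m, \bO_{D_m}) = \R$, plays the role of $\widetilde{X}_{f'=0}$ in MT's argument; Lemma~\ref{lGtA} then forces the relevant divisibility by $f_m$; and the real degree function $\deg_\omega$ at the generic point $\omega$ of $X_{s=0}$ (together with Lemma~\ref{lDoC} applied to the normal surface $X$) supplies the degree bound needed to close the induction.

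The main obstacle is precisely this length-preservation under restriction. Restricting a sum of squares to a subscheme can only shorten it, and there is no general recipe to invert that. What makes it plausible here is that $C_m$ is not arbitrary: it is the very curve engineered by the Bertini-and-Proposition~\ref{pSoR} step inside MT's inductive construction, tightly coupled to $g_{m-1}$. The same degree-and-divisibility mechanism that gave $l_A(g_m) \geq m$ in MT should, when transferred from the surface ring $B$ to $\Gamma(C_m)$ via the projective closure $D_m$ with $\Gamma(D_m) = \R$, yield $l_{\Gamma(C_m)}(g_{m-1}|_{C_m}) \geq m-1$. Verifying that this transfer goes through cleanly — in particular, that the $H_i$'s indeed extend to sections on a larger open of $D_m$ and that the divisibility/degree analysis assembles into an honest representation of $g_{m-1}$ back on $X_{s\neq 0}$ — is where the bulk of the technical work will lie.
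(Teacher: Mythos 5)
Your proposal takes a genuinely different route from the paper, and it contains a gap that I do not see how to fill. The paper's proof of this proposition does \emph{not} thread a curve through the inductive construction of Theorem~\ref{MT}. Instead, it first invokes Corollary~\ref{cHMT} to produce, for each sufficiently large even $d'$, a single ``long'' \emph{homogeneous} element $f\in\Gamma(X,\bO_X(d'))$ with $l_{\Gamma_*(X,\bO_X(1))}(f)>M$; it then chooses \emph{one} formally real smooth integral curve $C=X_{s=0}$ of degree $d$ \emph{much larger} than $d'$, so large that (by Serre vanishing and Enriques--Severi--Zariski) the restriction maps $\Gamma(X,\bO_X(m))\to\Gamma(C,\bO_C(m))$ are isomorphisms for all $m\le d'$. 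That isomorphism is the entire engine: any shorter sum-of-squares representation of $f|_C$ in $\Gamma_*(C,\bO_C(1))$ would lift, term by term and in the product as well, to a shorter representation of $f$ itself on $X$, a contradiction. Lemma~\ref{lDoC}, applied at the real zero locus of $t$, then transfers this length bound to the affine curve $C_{t\neq 0}$. So the length is preserved under restriction because the curve is chosen of degree far exceeding $d'$, not because of any compatibility with MT's recursion.

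Your proposal instead restricts $g_{m-1}$ (an inhomogeneous element of a localization, whose ``degree'' grows with $m$) to a curve $C_m$ tied to the $m$-th stage of MT's induction and asserts the length survives. There is no analogue of the isomorphism $\Gamma(X,\bO_X(m))\xrightarrow{\sim}\Gamma(C,\bO_C(m))$ in your setup, so the proposed lift of a short representation of $g_{m-1}|_{C_m}$ back to one of $g_{m-1}$ in $A$ has no mechanism behind it. Worse, the specific mechanism you gesture at is the wrong tool: in MT's proof, the extension argument over $\widetilde{X}_{f'=0}$ together with $\Gamma(\widetilde{X}_{f'=0})=\R$ is applied to $a_i=K_i-\delta_{i1}$, where $K=G/t^2$ restricts to the constant $1$ on the curve (because $f$ vanishes there), and it forces $a_i=0$. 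Applying the same device to your $H_i$'s, whose squares sum to the non-constant $g_{m-1}|_{C_m}$, would force the $H_i$'s to be constants, which is plainly false; it does not produce a lift. You also acknowledge that restricting a sum of squares to a subscheme can only shorten the length, but the argument as written never supplies the countervailing input (the degree control) that the paper uses to rule out shortening.
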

\begin{proof}
    Let $M$ be a positive integer.
    By Lemma~\ref{cHMT}, there exists an integer $n_0$ such that $P_{\bO_X(1)}(2m) > M$ for all integers $m \geq n_0$.
    
    By Enriques-Severi-Zariski Lemma (or duality theorem), there exists a positive integer $n_X$ such that $H^1(X,\bO_X(-m))=0$ and $\Gamma(X,\bO_X(-m))=0$ for all $m \geq n_X$.
    Let $d>2N_X+4n_0+2n_X$ be an even integer, and let $d'=\frac{d}{2}+N_X$, where $N_X$ is defined in Proposition~\ref{pSoR}.
    Since $X_{\sing}$ is a finite set, following the argument in Corollary~\ref{cnBT}, there exists a section $s \in \Gamma(X,\bO_X(d))$ such that the scheme $X_{s=0}$ is formally real, integral and smooth.
    Denote $X_{s=0}$ by $C$.
    By Proposition~\ref{pSoR}, there exists a section $t \in \Gamma(X,\bO_X(d'))$ with $C_{t=0} \subseteq C(\R)$.
    
    Let $d_1 \in \left[n_0, \frac{d-n_X}{2}\right]$ be an integer, and let $f \in \Gamma(X,\bO_X(2d_1))$ satisfy $l_{\Gamma_*(X,\bO_X(1))}(f)>M$.
    Define $B_m=\Gamma(C,\bO_C(m))$ and $B=\Gamma_*(C,\bO_C(1))$.
    By the definition of $d_1$, $\Gamma(X,\bO_X(d_1)) \to B_{d_1}$ and $\Gamma(X,\bO_X(2d_1)) \to B_{2d_1}$ are isomorphisms.
    Therefore, $l_B(f)=l_{\Gamma_*(X,\bO_X(1))}(f) > M$.
    
    Suppose $t^{2m}f=\sum_{i=1}^{M_0} (f_i')^2$ in $B$, where $m \in \Z^+$.
    Since $B$ is formally real, each $f_i'$ is homogeneous in $B$.
    As $C_{t=0} \subseteq C(\R)$, the divisor $\zZ(f_i')-\zZ(t^m)$ is effective for every $i$.
    By Lemma~\ref{lDoC}, there exist $f_i \in B_{d_1}$ such that $f_i'=f_it^m$.
    This implies $l_B(f)=l_B(t^{2m}f)>M$ for all $m \in \N$.
    Consequently, $\varliminf_{m \to +\infty}P_{\bO_C(1)}(2m)>M$.
    
    Therefore, there exists an integer $d_2 > d'$ and $g \in B_{2d_2}$ satisfying $l_B(g)>M$.
    By Proposition~\ref{pSoR} again, there exists a section $t_1 \in B_{d_2}$ with $C_{t_1=0} \subseteq C(\R)$.
    Following the argument above, $l_B(g)=l_B(t_1^{2m}g)>M$ for all $m \in \N$.
    This implies $l_{C_{t_1 \neq 0}}(\frac{g}{t_1^2})=l_B(g)>M$.
\end{proof}

\begin{prop}
    Let $f$ be a polynomial of total degree $n>0$ in $\R[x,y,z]$.
    Let $f_i$ be the homogeneous part of degree $(n-i)$ of $f$.
    Let $f_0=\prod_{k=1}^{m}D_k^{j_k}$ be an irreducible decomposition of $f_0$, where each $D_k$ is an irreducible homogeneous polynomial and $j_k> 0$.
    Define $v_k=\min\bigl\{i,2n+1 \big| D_k \nmid f_i\bigr\}$.
    Assume that each $D_k$ changes sign in $\R^3$.
    If one of the following conditions holds:
    \begin{enumerate}[1]
      \item there exists $k_0$ such that $D_{k_0}$ divides all $f_i$;
      \item $f_0$ is square-free;
      \item $f$ is irreducible and for each $k$, either $j_k$ or $v_k$ is an odd number;
    \end{enumerate}
    then $P\bigl(\R[x,y,z]/(f)\bigr)=\infty$.
\end{prop}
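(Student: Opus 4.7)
The plan is to apply Theorem~\ref{MT} to the projective closure of $\Spec(\R[x,y,z]/(f))$ in $\bP^3_{\R}$. Let $F=\sum_{i=0}^{n}f_i(x,y,z)\,w^i$ denote the homogenization of $f$ and set $s=w\in\Gamma(\bP^3,\bO(1))$. For any irreducible homogeneous divisor $G\mid F$ with dehomogenization $g(x,y,z)=G(x,y,z,1)$, the subscheme $X=V(G)\subset\bP^3$ is an integral projective surface, $\Gamma(X_{s\neq 0})=\R[x,y,z]/(g)$, and the natural surjection $\R[x,y,z]/(f)\twoheadrightarrow\R[x,y,z]/(g)$ gives $P(\R[x,y,z]/(f))\geq P(X_{s\neq 0})$: any $b=\sum b_i^2$ in $\R[x,y,z]/(g)$ of length $n$ lifts termwise to some $a=\sum\tilde b_i^2\in\R[x,y,z]/(f)$ with $l(a)\geq n$. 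It therefore suffices in each case to exhibit such a $G$ for which the boundary $\partial_{X(\C)}\bigl(X_{s\neq 0}(\R)\bigr)$ is Zariski dense in $X_{s=0}$, so that Theorem~\ref{MT} applies.

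Case~(1) is immediate: $D_{k_0}\mid f$, so take $G=D_{k_0}$, viewed inside $\R[x,y,z]\subset\R[x,y,z,w]$. Then $X=V(D_{k_0})\subset\bP^3$ is the integral cone over the formally real plane curve $V(D_{k_0})\subset\bP^2=V(w)$, and every real smooth point $[a:b:c:0]\in X_{s=0}(\R)$ is the limit of the real affine points $[a:b:c:1/\lambda]\in X_{s\neq 0}(\R)$ as $\lambda\to\infty$. Thus the boundary contains $X_{s=0}(\R)_{\sm}$, which is Zariski dense in $X_{s=0}$ by Proposition~\ref{prop1_2}.

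For cases~(2) and~(3), after passing to an irreducible factor of $f$ and possibly reducing to case~(1), I may assume $f$ itself is irreducible and $D_k\nmid f$ for every $k$; then $F$ is irreducible in $\R[x,y,z,w]$ (any nontrivial factor would otherwise have positive $w$-degree, contradicting $f_0\neq 0$), so $X=V(F)\subset\bP^3$ is integral with $X_{s=0}=V(f_0)\subset\bP^2$. For each $k$, let $U_k\subset V(D_k)$ be the nonempty Zariski open subset of smooth points avoiding the other components $V(D_{k'})$ and not lying in the zero locus of $f_{v_k}|_{V(D_k)}$; since $V(D_k)$ is formally real (because $D_k$ changes sign by hypothesis) and $D_k\nmid f_{v_k}$ by definition of $v_k$, $U_k(\R)$ is Zariski dense in $V(D_k)$. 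Fix $p\in U_k(\R)$ and choose affine coordinates $(u,v,w)$ on $\bP^3$ at $p$ with $D_k=u$ locally; then
\[
F = g_0(u,v)\,u^{j_k}+\sum_{1\leq i<v_k}u\,h_i(u,v)\,w^i+f_{v_k}(u,v)\,w^{v_k}+O(w^{v_k+1}),
\]
with $g_0(p),\,f_{v_k}(p)\neq 0$. The leading face of the Newton polygon of $F$ in the $(u,w)$-plane runs from $(j_k,0)$ to $(0,v_k)$, and its leading equation $u^{j_k}=C\,w^{v_k}$, with $C=-f_{v_k}(p)/g_0(p)\in\R^{\times}$, admits a real solution with $w$ arbitrarily small and nonzero whenever either (a)~$j_k$ is odd, in which case one extracts the unique real $j_k$th root of $Cw^{v_k}$ for any real $w\neq 0$, or (b)~$v_k$ is odd, in which case one chooses the sign of $w$ so that $Cw^{v_k}>0$ and then takes the positive $j_k$th root. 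By Newton--Puiseux, this real leading solution extends to a real analytic branch of $V(F)$ through $p$ meeting $\{w\neq 0\}$, so $p\in\partial_{X(\C)}\bigl(X_{s\neq 0}(\R)\bigr)$.

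Under condition~(2) each $j_k=1$ is odd, and under~(3) at least one of $j_k,v_k$ is odd by hypothesis; in either case the previous paragraph places all of $U_k(\R)$ in the boundary for every $k$, so the boundary is Zariski dense in each $V(D_k)$ and hence in $X_{s=0}=V(f_0)$, and Theorem~\ref{MT} yields $P(X_{s\neq 0})=\infty$. The hard part is the Newton--Puiseux analysis above: the parity conditions on $(j_k,v_k)$ are precisely what is needed to rule out sign obstructions when extracting real $j_k$th roots on the leading face, and if both $j_k$ and $v_k$ are even the leading equation may have no real solution at all. The remaining ingredients—homogenization, reduction to the irreducible case by passing to a factor of $f$, and the lifting of sums of squares through surjections—are essentially formal.
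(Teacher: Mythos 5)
Your overall strategy --- reduce to an irreducible factor of $f$, form the projective closure $X=\Proj\bigl(\R[x,y,z,w]/(\tilde f)\bigr)$, prove the boundary $\partial_{X(\C)}\bigl(X_{w\neq 0}(\R)\bigr)$ is Zariski dense in $X_{w=0}$, invoke Theorem~\ref{MT}, and lift sums of squares through the surjection $\R[x,y,z]/(f)\twoheadrightarrow\R[x,y,z]/(g)$ --- matches the paper, as does your treatment of case~(1) via the cone over $V(D_{k_0})$. Where you diverge is in establishing boundary density in cases (2) and (3). The paper does this by an elementary intermediate-value argument: when $j_h$ is odd, $f_0$ changes sign near $p$, so for each small $w$ the function $q\mapsto\tilde f(q,w)$ has a zero near $p$; when $v_h$ is odd, for $q$ near $p$ the one-variable function $w\mapsto\tilde f(q,w)$ changes sign on $[-\epsilon,\epsilon]$ because the lower-order part $\sum_{i<v_h}f_i w^i$ vanishes at $p$ while the $w^{v_h}$-term dominates and changes sign. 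You instead run a Newton--Puiseux analysis in local coordinates $(u,w)$.

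The Newton--Puiseux argument is sound in case~(2): since $j_k=1$, the lower boundary of the Newton polygon is forced to be the single primitive segment from $(1,0)$ to $(0,v_k)$, whose edge polynomial is binomial and linear in $u$. But in case~(3), your claim that the leading face ``runs from $(j_k,0)$ to $(0,v_k)$'' with leading equation $u^{j_k}=Cw^{v_k}$ is false in general once $j_k>1$. The intermediate terms $uh_i(u,v)w^i$ with $1\le i<v_k$ contribute lattice points $(a_i,i)$ with $a_i\ge 1$ that can lie strictly below the segment joining $(j_k,0)$ and $(0,v_k)$: for instance $j_k=3$, $v_k=2$, with $D_k$ exactly dividing $f_1$ (so $j_k$ is odd and condition (3) holds) gives vertices $(3,0),(1,1),(0,2)$ and two edges, neither governed by $u^{j_k}=Cw^{v_k}$. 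Moreover even when the lower boundary is a single segment from $(j_k,0)$ to $(0,v_k)$, interior lattice points arise whenever $\gcd(j_k,v_k)>1$, so the edge polynomial need not be the binomial $g_0u^{j_k}+f_{v_k}w^{v_k}$. As written, your parity argument therefore does not establish the existence of a real branch through $p$ in case~(3). Repairing it would require analyzing every edge of the actual Newton polygon and ruling out sign obstructions on each, including non-binomial edge polynomials --- exactly the combinatorics the paper's intermediate-value argument sidesteps by never decomposing into edges at all.
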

\begin{proof}
    Let $\tilde{f}=\sum_{i=0}^{n} f_{i}w^i$, and let $X=\Proj\bigl(\R[x,y,z,w]/(\tilde{f})\bigr)$.
    
    Let $X_k:=\Proj\bigl(\R[x,y,z]/(D_k)\bigr)$ be the irreducible components of $X_{w=0}$.
    Define $W_k=(X_k)_{\sm}\setminus \bigcup_{j\neq k} X_j$.
    
    (1) Let $k_0 \in [1,m]\cap \Z$ such that $D_{k_0}$ divides all $f_i$.
    
    Since $D_{k_0}$ changes sign in $\R^3$, $\R[x,y,z]/(D_{k_0})$ and $\R[x,y,z,w]/(D_{k_0})$ are formally real by Lemma~\ref{lSaR}.
    By Corollary~\ref{cRP}, $T=\Proj\bigl(\R[x,y,z,w]/(D_{k_0})\bigr)$ is a formally real conical surface.
    It is not difficult to see $\partial_{T(\C)}\bigl(T_{w\neq 0}(\R)\bigr)=T_{w=0}(\R)$.
    $T_{w=0}(\R)$ is Zariski dense in $T_{w=0}$ because $T_{w=0}$ is also formally real.
    Thus, $P\bigl(\R[x,y,z]/(D_{k_0})\bigr)=\infty$.
    Since $\R[x,y,z]/(D_{k_0})$ is a quotient of $\R[x,y,z]/(f)$, $P\bigl(\R[x,y,z]/(f)\bigr)=\infty$.
    
    (2) We may assume $f$ is irreducible because every irreducible factor of $f$ has the same property as $f$.
    Let $h \in [1,m]\cap \Z$.
    Since $X_k$ is formally real, we can choose a point $p \in W_h(\R)$.
    Fix coordinates of $p$ in $\R^3$ (or equivalently choose a suitable affine open subscheme of $\bP^2_{\R}$).
    
    Let $U \subseteq \R^3$ be a connected Euclidean open subset containing $p$ with compact closure.
    By the proof of Lemma~\ref{lSaR}, $f_0$ changes sign in $U$.
    Then there exists a positive real number $\delta$ such that $(-\delta,\delta) \subseteq f_0(U)$.
    Define \[\phi (w) = \max_{ q \in \overline{U}}\left( \Bigl| \sum_{i=i}^{n} f_{i}(q)w^i \Bigr| \right).\]
    $\phi$ is a continuous function on $\R$.
    Then, there is a positive real number $\epsilon$, such that $\phi\bigl((-\epsilon,\epsilon)\bigr)<\delta$.
    
    By the mean value theorem, for each $w \in (-\epsilon,\epsilon)$, there exists $q_w \in U$ with $\tilde{f}(q_w,w)=0$.
    This implies $p \in \partial_{X(\C)}\bigl(X_{w\neq 0}(\R)\bigr)$.
    Therefore, $\partial_{X(\C)}\bigl(X_{w\neq 0}(\R)\bigr)$ is Zariski dense in $X_{w=0}$.
    
    (3) Since $f$ is irreducible, no $D_k$ divides all $f_i$.
    Then for each $k$, $D_k$ does not divide $f_{v_k}$. 
    Define $H_k=\Proj\bigl(\R[x,y,z]/(f_{v_k})\bigr)$.
    
    Let $h \in [1,m]\cap \Z$. 
    Since $X_h$ is formally real, we can choose a real point $p \in W_h \setminus H_h$.
    Fix coordinates of $p$ in $\R^3$.
    Note that $f_{v_h}(p) \neq 0$.
    
    If $j_h$ is odd, then the proof is analogous to (2).
    We may assume $v_h$ is odd.
    
    Let $U \subseteq \R^3$ be a Euclidean open subset containing $p$, and let $\epsilon>0$ be a small positive real number.
    Define \[\phi_0 (x,y,z) = \min_{ w \in [-\epsilon,\epsilon]}\left(\sum_{i=v_h}^{n} f_{i}(x,y,z)w^i \right),\] and similar for $\phi_1$ with $\max$.
    $\phi_0$ and $\phi_1$ are continuous functions on $\R^3$.
    Since $v_h$ is an odd number, $\phi_0(p) < 0$ and $\phi_1(p)>0$.
    Thus, we have an open subset $V_0$ containing $p$ and a positive real number $\delta$ such that $\phi_0(V_0) < -\delta$ and $\phi_1(V_0)> \delta$.
    
    Let $f^{\circ}=\sum_{i<v_h} f_{i}w^i$.
    By the definition of $v_h$, $f^{\circ}(p)  = 0$ for all $w \in \R$.
    Thus, there exists an open subset $V_1\subseteq \R^3$ containing $p$ such that $\bigl|f^{\circ}(V_1,w) \bigr|< \delta$ for $w \in [-\epsilon,\epsilon]$.
    
    Let $V \subseteq U \cap V_0 \cap V_1$ be an open subset containing $p$.
    By the mean value theorem, for each $q \in V$, there exists $w_q \in [-\epsilon,\epsilon]$ with $\tilde{f}(q,w_q)=0$.
    This implies $p \in \partial_{X(\C)}\bigl(X_{w\neq 0}(\R)\bigr)$.
    Therefore, $\partial_{X(\C)}\bigl(X_{w\neq 0}(\R)\bigr)$ is Zariski dense in $X_{w=0}$.
\end{proof}

\section{Conjectures}

Let $k$ be a formally real field satisfying $P\bigl(k(t)\bigr) < \infty$. Let $A$ be a finitely generated $k$-algebra, and let $W$ denote the Zariski closure of formally real points in $\Spec(A)$. 
We have established that $P(A) < \infty$ if $\dim(W)=0$. For algebras with $\dim(A) = 1$, the finiteness of $P(A)$ remains conjectural (see \cite[Problem 2]{Choi82}).

It is known that $P(A)$ is finite if $\dim (W)\leq 1$ and $k$ is real closed by Corollary \ref{cDl1}.
If $\dim(W) \geq 3$, then $P(A)= \infty$ by \cite[Theorem 6.6]{Choi82}.
Thus, the case $\dim(W) = 2$ remains to be studied (see \cite[Problem 4]{Choi82}).
Building on \cite[Conjecture 3.3]{Blac24} which reframes this problem, and supported by evidence from \cite[Theorem 4.16]{Choi82}, \cite[Theorem 2.3]{Blac24} and Theorem~\ref{MT}, we conjecture:
\begin{conj}\label{conj1}
	$P(A) < \infty \iff\dim(W)<2$.
\end{conj}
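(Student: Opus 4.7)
The plan is to address the two implications separately, reducing each to the theorems established in the paper. For the implication $\dim(W) < 2 \Rightarrow P(A) < \infty$, Theorem~\ref{tRtR} furnishes $x_1, \dots, x_n \in A$ with $\sum_{i=1}^{n} x_i^2 = 0$ and $W = \Spec\bigl(A/(x_1, \dots, x_n)\bigr)^{\red}$, and Corollary~\ref{cRtR} then reduces the problem to establishing $P\bigl(A/(x_1^2, \dots, x_n^2)\bigr) < \infty$. This quotient has Krull dimension equal to $\dim(W) \leq 1$. The zero-dimensional case is Corollary~\ref{cDl0}. The one-dimensional case is the still-open \cite[Problem 2]{Choi82}; under the hypothesis $P(k(t)) < \infty$, one would normalize and decompose into a product of Dedekind domains whose fraction fields are transcendence-degree-one extensions of $k$, then bound lengths of sums of squares via the behaviour at real places and a Riemann-Roch style argument, and finally control the passage from each Dedekind domain to the original subring via its conductor.

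For the implication $\dim(W) \geq 2 \Rightarrow P(A) = \infty$, the case $\dim(W) \geq 3$ is \cite[Theorem 6.6]{Choi82}. For $\dim(W) = 2$, choose a minimal prime $\mathfrak{p} \subseteq A$ with $A/\mathfrak{p}$ integral formally real of dimension two, which exists by Proposition~\ref{prop1_2} because the formally real points are dense in a two-dimensional irreducible component of $W$. Lifting sum-of-squares representations through the surjection $A \twoheadrightarrow A/\mathfrak{p}$ shows $P(A) \geq P(A/\mathfrak{p})$, so it suffices to prove $P(A/\mathfrak{p}) = \infty$. Fix a normal projective integral compactification $X$ of $\Spec(A/\mathfrak{p})$ and set $Z = X \setminus \Spec(A/\mathfrak{p})$, a reduced closed subscheme of dimension at most one. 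By Corollary~\ref{cnBT} and Proposition~\ref{pSoR}, formally real integral divisors exist in arbitrarily large-degree linear systems on $X$; the aim is to produce a section $s \in \Gamma\bigl(X, \bO_X(d)\bigr)$ such that $X_{s=0}$ contains $Z$ set-theoretically while all irreducible components of $X_{s=0}$ are formally real. Then Corollary~\ref{cMSC} gives $P(X_{s \neq 0}) = \infty$, and since $X_{s \neq 0} \subseteq \Spec(A/\mathfrak{p})$ is a principal open whose coordinate ring is a localization of $A/\mathfrak{p}$, the inequality $P(A/\mathfrak{p}) \geq P(X_{s \neq 0})$ closes the argument.

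The main obstacle is the simultaneous construction of such an $s$: when $Z$ has an irreducible component that is purely nonreal, no formally real effective divisor on $X$ can contain that component set-theoretically, and a direct Bertini-type argument fails. One approach is to establish a structural result asserting that any integral formally real quasi-projective surface over $\R$ admits a normal projective compactification whose boundary is purely formally real; this would follow from a sequence of birational modifications that successively eliminate nonreal boundary components using the density of formally real points on $X$. Alternatively, one can try to strengthen Corollary~\ref{cMSC} to permit $X_{s=0}$ with mixed real and nonreal components, provided at least one formally real component of $X_{s=0}$ meets $X_{\sm}(\R)$ transversally; the sign-change and vanishing-order arguments in the proof of Theorem~\ref{MT}, applied along that single good component, should still produce sums of squares of unbounded length. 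Either strategy requires technical input beyond the present paper, consistent with the statement being formulated as a conjecture.
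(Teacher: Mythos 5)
The statement you were asked to prove is a conjecture, and the paper supplies no proof of it; the only content the paper adds is Proposition~\ref{pCt1}, which gives equivalent reformulations over $\R$. Your response is therefore appropriately not a proof but an accurate assessment of what is established and what is open, and it closely mirrors the paper's own framing.

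A few refinements. For the forward implication $\dim(W) < 2 \Rightarrow P(A) < \infty$: when $k$ is real closed (in particular $k = \R$, the case singled out in Proposition~\ref{pCt1}), the reduction via Theorem~\ref{tRtR} and Corollary~\ref{cRtR} lands you in Proposition~\ref{plD}, which handles $\dim \leq 1$ directly — this is exactly Corollary~\ref{cDl1}, so that direction is not open over $\R$. Your invocation of \cite[Problem 2]{Choi82} is correct but applies only to non-real-closed base fields $k$ with $P(k(t)) < \infty$, where the one-dimensional case remains open; it would sharpen your account to separate these two regimes. Your sketch of a normalization-plus-conductor argument for general $k$ is plausible in spirit but is not carried out in the paper.

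For the reverse implication in the critical case $\dim(W) = 2$: your reduction to an integral formally real quotient $A/\mathfrak{p}$ is correct (lifting each $b_i$ in a representation $b = \sum b_i^2$ does give $P(A) \geq P(A/\mathfrak{p})$). The obstacle you identify — a normal projective compactification of $\Spec(A/\mathfrak{p})$ may have purely nonreal boundary components, so Corollary~\ref{cMSC} and Theorem~\ref{MT} do not directly apply — is precisely the crux, and it is the content that makes Proposition~\ref{pCt1}(5) a nontrivial reformulation rather than a trivial consequence of Corollary~\ref{cMSC}. Your two proposed workarounds (eliminating nonreal boundary components by birational modifications, or running the degree-argument of Theorem~\ref{MT} along a single formally real component of the boundary) are both reasonable directions; the paper offers no such argument, and the remark following Theorem~\ref{MT} (the example $\Proj(\R[x,y,z,w]/(w^2z^2 - x^4 - y^4 - w^4))$ with a purely nonreal boundary curve yet infinite Pythagoras number) shows both that the hypothesis of Theorem~\ref{MT} is not necessary and that some mechanism beyond it must account for such cases. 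In short: your analysis is sound, correctly calibrated to the statement's conjectural status, and consistent with the paper's reformulations.
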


This provides a complete dimensional framework for predicting the finiteness of Pythagoras numbers based on the Krull dimension of formally real points.
When $k=\R$,  Conjecture \ref{conj1} admits five equivalent formulations.

\begin{prop} \label{pCt1}
The following conjectures are equivalent:
    \begin{enumerate}[1]
    \item Conjecture~\ref{conj1} holds over $\R$.
    \item For any finitely generated two-dimensional formally real domain $A$ over $\R$, $P(A) = \infty$.
    \item Let $f\in \R[X,Y,Z]$ be irreducible and $g \in \R[X,Y,Z]$ satisfy $f \nmid g$. 
        If $B=\R[X,Y,Z]_g/(f)$ is formally real, then $P(B) = \infty$.
    \item With notation as in (3), if $f$ changes sign in $\R^3$,  then $P(B) = \infty$ (see a similar version in \cite[Conjecture 3.3]{Blac24}).
    \item Let $A$ be a finitely generated two-dimensional formally real regular domain over $\R$ with $P(A)=\infty$. Then for every nonzero $ f \in A$,  $P(A_f) = \infty$.
    \end{enumerate}
\end{prop}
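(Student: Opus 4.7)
My plan is to close the cycle $(1)\Leftrightarrow(2)\Leftrightarrow(3)\Leftrightarrow(4)$ and separately establish $(2)\Leftrightarrow(5)$; the essential content sits in the birational comparisons $(3)\Rightarrow(2)$ and $(5)\Rightarrow(2)$. For $(1)\Leftrightarrow(2)$: the cases $\dim(W)\leq 1$ and $\dim(W)\geq 3$ of Conjecture~\ref{conj1} are settled unconditionally by Corollary~\ref{cDl1} and \cite[Theorem~6.6]{Choi82} respectively, so (1) reduces to the assertion that $\dim(W)=2$ implies $P(A)=\infty$. If $A$ is a two-dimensional formally real $\R$-domain then $W=\Spec(A)$ by Proposition~\ref{prop1_2}, giving $(1)\Rightarrow(2)$. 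Conversely, starting from $A$ with $\dim(W)=2$, I pick a two-dimensional irreducible component $\Spec(A/\mathfrak{p})$ of $W$; density of formally real points inside it forces $A/\mathfrak{p}$ to be a two-dimensional formally real $\R$-domain, so (2) yields $P(A/\mathfrak{p})=\infty$. Lifting a sum-of-squares presentation through $A\twoheadrightarrow A/\mathfrak{p}$ then gives $P(A)\geq P(A/\mathfrak{p})=\infty$.

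For the chain $(2)\Rightarrow(3)\Leftrightarrow(4)\Rightarrow(2)$: the direction $(2)\Rightarrow(3)$ is immediate, since under the hypotheses of (3) the ring $B=\R[X,Y,Z]_g/(f)$ is a two-dimensional formally real $\R$-domain. For $(3)\Leftrightarrow(4)$ I apply Lemma~\ref{lSaR} to the smooth scheme $\A^3_{\R}$ and the reduced Cartier divisor $\{f=0\}$ (reduced because $f$ is irreducible) to see that $f$ changes sign in $\R^3$ iff $\R[X,Y,Z]/(f)$ is formally real iff its nontrivial localization $B$ is formally real, so the hypotheses of (3) and (4) coincide. The nontrivial direction $(3)\Rightarrow(2)$ goes through birational geometry: for a two-dimensional formally real $\R$-domain $A$, $\Frac(A)$ has transcendence degree two and is separable over some $\R(t_1,t_2)$, so the primitive element theorem followed by clearing denominators into an integral primitive element realizes $\Frac(A)=\Frac(\R[X,Y,Z]/(f))$ for an irreducible $f$. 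Since $A$ and $\R[X,Y,Z]/(f)$ are finitely generated over $\R$ with the same fraction field, they share a common localization $A_g\cong \R[X,Y,Z]_{g'}/(f)$. Then (3) gives $P(A_g)=\infty$, and the elementary inequality $P(A_g)\leq P(A)$ (clear a common denominator: any decomposition of $a\in S(A_g)$ produces one for $ag^{2k}\in S(A)$ of length at most $P(A)$, then re-divide by $g^{2k}$) forces $P(A)=\infty$.

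For $(2)\Leftrightarrow(5)$: since $A_f$ is again a two-dimensional formally real $\R$-domain whenever $A$ is, $(2)\Rightarrow(5)$ is trivial. For $(5)\Rightarrow(2)$ I build the required infinite-Pythagoras regular model using the paper's main theorem. Given $A$ as in (2) with $K=\Frac(A)$, resolution of surface singularities in characteristic zero produces a smooth projective integral model $\widetilde{X}$ of $K$; it is formally real because its generic point is. By Corollary~\ref{cnBT} there exists $s\in\Gamma(\widetilde{X},\bO_{\widetilde{X}}(1))$ with $\widetilde{X}_{s=0}$ formally real and integral, and Corollary~\ref{cMSC} then yields $P(\widetilde{X}_{s\neq 0})=\infty$. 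Setting $A_0:=\Gamma(\widetilde{X}_{s\neq 0})$, I obtain a two-dimensional formally real regular $\R$-domain with $\Frac(A_0)=K$ and $P(A_0)=\infty$, meeting the hypothesis of (5). A common localization $A_g=(A_0)_{g_0}$ together with (5) applied to $A_0$ gives $P(A_g)=\infty$, and $P(A)\geq P(A_g)=\infty$ by the inequality above.

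The main obstacle I anticipate is the birational bookkeeping in $(3)\Rightarrow(2)$ and $(5)\Rightarrow(2)$: producing the primitive-element presentation or the smooth projective model via resolution of surface singularities, arguing carefully that two finitely generated $\R$-models of the same function field admit a common localization, and verifying the clean inequality $P(A_g)\leq P(A)$. None of these pieces is individually deep, but they must be stitched together so that infinite Pythagoras numbers transfer across birational equivalence without losing control of the length function.
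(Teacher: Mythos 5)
Your proof is correct and takes essentially the same route as the paper: the same cycle of implications, the same reduction to Corollary~\ref{cDl1} and \cite[Theorem 6.6]{Choi82} for $(1)\Leftrightarrow(2)$, the same use of Lemma~\ref{lSaR} for $(3)\Leftrightarrow(4)$, the same primitive-element/common-localization step for $(3)\Rightarrow(2)$, and the same smooth projective model built from Corollaries~\ref{cnBT} and \ref{cMSC} for $(5)\Rightarrow(2)$. The only difference is cosmetic: you make explicit the transfer inequalities $P(A)\geq P(A/\mathfrak{p})$ and $P(A_g)\leq P(A)$, and the choice of a common principal localization of birationally equivalent models, all of which the paper uses but leaves implicit.
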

\begin{proof}
    $(1) \Rightarrow (2)$, $(2) \Rightarrow (3)$ and $(2) \Rightarrow (5)$ are straightforward.
    
    \medskip \noindent $(2) \Rightarrow (1)$. Let $C$ be a finitely generated algebra over $\R$, and let $W$ be the Zariski closure of the real points of $\Spec(C)$.
    If $\dim(W) < 2$, then $P(C)<\infty$ by Corollary~\ref{cDl1}.
    
    If $\dim(W) \geq 2$, then $C$ has a formally real integral quotient $A$ by Proposition~\ref{prop1_2}(1, 4), whose Krull dimension exceeds $1$.
    If $\dim(A) > 2$, then $P(A) = \infty$ (see \cite[Theorem 6.6]{Choi82}).
    If $\dim(A)= 2$, then $P(A) = \infty$ by (2). Thus, $P(C)=\infty$.
    
    \medskip \noindent $(3) \Rightarrow (2)$. 
    Let $A$ be a finitely generated two-dimensional integral domain over $\R$, and let $W$ be the Zariski closure of the real points of $\Spec(A)$.
    The fraction field of $A$ is a finite extension of $\R(X,Y)$.
    Therefore, there is a localization $D$ of $A$ such that $D=\R[X,Y,Z]_{g'}/(f')$, where $f',g' \in \R[X,Y,Z]$.
    
    If $A$ is formally real, then $D$ is formally real since they share the same function field.
    Hence, $P(A) \geq P(D) = \infty$ by (3).
    
    \medskip \noindent $(3) \Leftrightarrow (4)$.  We need to prove that $B$ is formally real $\Leftrightarrow$ $f$ changes sign in $\R^3$, which is established in Lemma~ \ref{lSaR}.
    
    \medskip \noindent $(5) \Rightarrow (2)$. 
    Let $A$ be a finitely generated two-dimensional domain over $\R$.
    Then $\Spec(A)$ is birational to a projective smooth surface $X \hookrightarrow \bP^n_{\R}$.
    Let $s \in \Gamma(X, \bO_X(1))$ such that $X_{s=0}$ is a formally real integral curve.
    Let $B=\Gamma(X_{s \neq 0})$.
    Then $P(B)=\infty$ by Corollary~\ref{cMSC}.
    Let $U$ be a common principal open subscheme of $X_{s\neq 0}$ and $\Spec(A)$.
    Let $C=\Gamma(U)$.
    Then $P(C) \leq P(A)$.
    Since $P(C)=\infty$ by (5), it follows that $P(A)= \infty$.
\end{proof}
\section{Statements and Declarations}
\subsection{Ethical Statement}
The work adheres to the journal's ethical standards and principles of academic integrity.
\subsection{Funding and Conflicts of Interest}
This work was partially supported by the National Natural Science Foundation of China (Grant No. 12371013) and the Innovation Program for Quantum Science and Technology (Grant No. 2021ZD0302902).
The authors declare no financial or non-financial conflicts of interest that could influence the content or conclusions of this work.
\subsection{Data Accessibility}
No datasets were generated or analyzed during this study. Data sharing is not applicable to this purely theoretical work.

\end{document}